\title{Tube representations and twisting of graded categories}
\date{May 7, 2018}
\author{Jyotishman Bhowmick}
\address{Stat-Math Unit, Indian Statistical Institute, Kolkata, INDIA}
\email{jyotishmanbmath@gmail.com}
\author{Shamindra Ghosh}
\address{Stat-Math Unit, Indian Statistical Institute, Kolkata, INDIA}
\email{shamindra.isi@gmail.com}
\author{Narayan Rakshit}
\address{Stat-Math Unit, Indian Statistical Institute, Kolkata, INDIA}
\email{narayan753@gmail.com}
\author{Makoto Yamashita}
\address{Department of Mathematics, Ochanomizu University, JAPAN}
\email{yamashita.makoto@ocha.ac.jp}
\setlist{itemsep=1ex,topsep=1ex} 
\setlist[1]{leftmargin=*}
\setlist[itemize,1]{labelindent=1em}
\setlist[itemize,2]{leftmargin=2pc,labelsep=*}
\setlist[enumerate,1]{label=(\roman*)}
\setlist[enumerate,2]{label=(\alph*)}
\renewcommand{\PrintDOI}[1]{%
  \href{http://dx.doi.org/#1}{{\tt DOI:#1}}%
}
\renewcommand{\eprint}[1]{#1}
\numberwithin{equation}{section}
\numberwithin{figure}{section}
\theoremstyle{plain}
\newtheorem{thm}{Theorem}[section]
\newtheorem{prop}[thm]{Proposition}
\newtheorem{lem}[thm]{Lemma}
\newtheorem{cor}[thm]{Corollary}
\theoremstyle{remark}
\newtheorem{rem}[thm]{Remark}
\theoremstyle{definition}
\newtheorem{defn}[thm]{Definition}
\newtheorem{expl}[thm]{Example}
\mathchardef\mhyph="2D
\newcommand{\ra}{\rightarrow}
\newcommand{\rab}{\rangle}
\newcommand{\lab}{\langle}
\newcommand{\mcal}{\mathcal}
\newcommand{\N}{\mathbb N}
\newcommand{\Z}{\mathbb Z}
\newcommand{\C}{\mathbb{C}}
\newcommand{\R}{\mathbb{R}}
\newcommand{\T}{\mathbb{T}}
\newcommand{\cA}{\mathcal{A}}
\newcommand{\cC}{\mathcal{C}}
\newcommand{\cD}{\mathcal{D}}
\newcommand{\cE}{\mathcal{E}}
\newcommand{\cG}{\mathcal{G}}
\newcommand{\cI}{\mathcal{I}}
\newcommand{\cJ}{\mathcal{J}}
\newcommand{\cT}{\mathcal{T}}
\newcommand{\cZ}{\mathcal{Z}}
\newcommand{\bT}{\mathbb{T}}
\newcommand{\un}{\mathbbm{1}}
\newcommand{\scX}{\mathscr{X}}
\newcommand{\scY}{\mathscr{Y}}
\newcommand{\rC}{\mathrm{C}}
\newcommand{\rH}{\mathrm{H}}
\newcommand{\vlon}{\varepsilon}
\newcommand{\vphi}{\varphi}
\newcommand{\ol}[1]{\overline{#1}}
\newcommand{\abs}[1]{\left|{#1}\right |}
\newcommand{\norm}[1]{\left\|{#1}\right\|}
\renewcommand{\t}[1]{\text{#1}}
\newcommand{\flr}[1]{\left\lfloor#1\right\rfloor}
\newcommand{\SU}{\mathrm{SU}}
\newcommand{\TL}{\mathrm{TL}}
\newcommand{\finNat}{\operatorname{Nat}_{00}}
\newcommand{\bbull}{\mathbin{\bar{\bullet}}}
\newcommand{\indcat}[1]{\mathrm{ind}\mhyph{#1}}
\newcommand{\modcat}{\mhyph\mathrm{mod}}
\newcommand{\Id}{\mathrm{Id}} 
\newcommand{\id}{\mathrm{id}} 
\newcommand{\rnC}{\bar{\mathrm{C}}}
\newcommand{\alg}{\mathrm{alg}}
\newcommand{\liesl}{\mathfrak{sl}}
\newcommand{\opos}{\mathrm{op}}
\DeclareMathOperator{\obj}{Obj}
\DeclareMathOperator{\Irr}{Irr}
\DeclareMathOperator{\Ch}{Ch}
\DeclareMathOperator{\Tr}{Tr}
\DeclareMathOperator{\Mor}{Mor}
\DeclareMathOperator{\End}{End}
\DeclareMathOperator{\Ad}{Ad}
\DeclareMathOperator{\dom}{dom}
\DeclareMathOperator{\codom}{codom}
\DeclareMathOperator{\Map}{Map}
\DeclareMathOperator{\sgn}{sgn}
\DeclareMathOperator{\Rep}{Rep}
\keywords{monoidal category, quantum double, tube algebra}
\subjclass[2010]{Primary 18D10; Secondary 46L37}
\begin{document}

\begin{abstract}
We study deformation of tube algebra under twisting of graded monoidal categories.
When a tensor category $\cC$ is graded over a group $\Gamma$, a torus-valued $3$-cocycle on $\Gamma$ can be used to deform the associator of $\cC$.
We show that it induces a $2$-cocycle on the groupoid of the adjoint action of $\Gamma$.
Combined with the natural Fell bundle structure of tube algebra over this groupoid, we show that the tube algebra of the twisted category is a $2$-cocycle twisting of the original one.
\end{abstract}

\maketitle

\section{Introduction}

In the theory of quantum symmetries, the concept of \emph{quantum double} provides a powerful guiding principle to understand various aspects of quantum groups.
There are several ways to precisely realize it mathematically: originally it appeared as the \emph{Drinfeld double} of Hopf algebras which provides a uniform way to produce solutions of the Yang--Baxter equation, and can be regarded as a Hopf algebraic analogue of the complexification of compact semisimple Lie algebras.
A closely related notion is the \emph{Drinfeld center}, as formulated by Drinfeld, Majid, and Joyal and Street independently, in the more general context of tensor categories which generalizes the Drinfeld double through a Tannaka--Krein type duality principle.
Besides producing braided tensor categories, it turned out to have far reaching roles in the theory of tensor categories, such as a natural framework to consider algebra objects encoding various categorical structures of interest.

While there are several other notable approaches, \emph{tube algebra} is perhaps the most concrete (and combinatorial) formalism to define the quantum double for tensor categories, which was introduced by Ocneanu~\cite{MR1317353} in his pioneering study of subfactors and topological quantum field theory.
Although at first sight this looks quite different from other definitions of the Drinfeld center, its precise correspondence was clarified through the subsequent work of Longo and Rehren~\cite{MR1332979}, and Izumi~\cite{MR1782145}, to name a few.

The goal of this work is to understand the change of tube algebra induced by a change of associator on the tensor category.
To be more specific, we consider a change of associator on a tensor category $\cC$ graded by a discrete group $\Gamma$, induced by group 3-cocycles on $\Gamma$, inspired by the work of Kazhdan and Wenzl~\cite{MR1237835} on the classification of the tensor categories with the fusion rules of quantum $\mathrm{SL}(n)$ groups.

A key insight is that the tube algebra $\cT(\cC)$ has a structure of \emph{Fell bundle}~\citelist{\cite{MR1103378}\cite{MR1443836}} over the action groupoid $\cG$ of $\Gamma$ acting on itself by the adjoint action.
Extending the analysis of pointed categories by Bisch, Das, and the second and the third named authors~\cite{MR3699167} (a scheme which was also known to B\'antay and others in a slightly different framework, see~\citelist{\cite{MR1070067}\cite{MR2146291}\cite{MR2443249}}), we show that $\cT(\cC^\omega)$, the tube algebra of $\cC$ with its associator twisted by $\omega$, is isomorphic to the $2$-cocycle twist of $\cT(\cC)$ by a $2$-cocycle on $\cG$ induced by $\omega$.

When $\Gamma$ is a cyclic group, this implies that the Drinfeld center $\cZ(\cC^\omega)$ of the twisted category is equivalent to $\cZ(\cC)$ as a linear category.
The monoidal structure still needs to be modified in the presence of $\omega$, and we give an explicit formula for this twisting.
For one thing, when $\cC$ is braided and $\cZ(\cC)$ admits a good description as a tensor category, this allows us to determine whether $\cC^\omega$ is braided or not which we will explain in detail for the Kazhdan--Wenzl categories (Example~\ref{expl:kazh-wenzl-braided}).

\smallskip
This paper is organized as follows: in Section \ref{sec:prelim} we recall basic concepts and fix notation, which can be easily skipped by an expert.
In Section \ref{sec:ann-alg-nonstr-cat} we present the precise structure of tube algebras for nonstrict tensor categories, which serves as a basis for the twisting argument. Our main result is in Section \ref{sec:twist-ann-alg}. Finally, we present several examples in Section \ref{sec:examples}.

\bigskip
\paragraph{Acknowledgements} It is our pleasure to thank Masaki Izumi, Corey Jones, Madhav Reddy, and Shigeru Yamagami for fruitful discussions at various stages of the project.

\section{Preliminaries}\label{sec:prelim}

In this paper, all categories are assumed to be small, and we mostly work with \emph{C$^*$-categories}, although most of our constructions can be carried out in the setting of semisimple tensor categories over more general coefficients.
We denote the unit circle by $\bT = \{z \in \C \mid \abs{z} = 1 \}$.

\subsection{C\texorpdfstring{$^*$}{*}-tensor categories}

We mainly follow the convention of~\cite{MR3204665}.

When $\cC$ is a category and $X, Y$ are objects of $\cC$, the morphism space from $X$ to $Y$ will be denoted by $\Mor_\cC(X, Y)$ or more simply by $\cC (X,Y)$.
The identity morphism of $X$ is denoted by $1_X$.
For C$^*$-categories, we always assume that direct sum of objects and images of projections exist in the category.
We say that $X \in \obj(\cC)$ is \emph{simple} if $\dim \cC(X, X) = 1$, and that $\cC$ is \emph{semisimple} if $\cC(X, Y)$ is finite dimensional for any $X, Y \in \obj(\cC)$.

A \emph{C$^*$-tensor category} is given by a C$^*$-category $\cC$, a C$^*$-bifunctor $\otimes \colon \cC \times \cC \to \cC$, a distinguished object $\un \in \obj(\cC)$, and natural unitary transformations $\lambda_X \colon \un \otimes X \to X$, $\rho_X\colon X \otimes \un \to X$, and
$$
\alpha_{X,Y,Z} \colon (X \otimes Y) \otimes Z \to X \otimes (Y \otimes Z)
$$
satisfying a standard set of axioms, most notably the \emph{pentagon equation} saying that the diagram
$$
\begin{tikzpicture}[commutative diagrams/every diagram]
\begin{scope}[yscale=0.6,xscale=1,evaluate={\radius=3;}]
\path (0,0) (0,\radius+1); 
  \node (P0) at (0,\radius) {$((U \otimes V)\otimes W) \otimes X $};
  \node (P1) at ({\radius*cos(90+72)-1},{\radius*sin(90+72)}) {$(U \otimes (V\otimes W))\otimes X$};
  \node (P2) at ({\radius*cos(90+2*72)-1.3},{\radius*sin(90+2*72)}) {$U \otimes ((V\otimes W)\otimes X)$};
  \node (P3) at ({\radius*cos(90+3*72)+1.3},{\radius*sin(90+3*72)}) {$U \otimes (V\otimes (W\otimes X))$};
  \node (P4) at ({\radius*cos(90+4*72)+1},{\radius*sin(90+4*72)}) {$(U \otimes V) \otimes (W\otimes X)$};
  \path[commutative diagrams/.cd, every arrow, every label]
    (P0) edge node[swap] {$\alpha_{U,V,W}\otimes 1_X$} (P1)
    (P1) edge node[swap,pos=0.3] {$\alpha_{U,V\otimes W,X}$} (P2)
    (P2) edge node[swap] {$1_U \otimes \alpha_{V,W,X}$} (P3)
    (P4) edge node[pos=0.3] {$\alpha_{U,V,W\otimes X}$} (P3)
    (P0) edge node {$\alpha_{U \otimes V,W,X}$} (P4);
\end{scope}
\end{tikzpicture}
$$
is commutative. In the following we always assume that $\un$ is simple.

When $\cC$ and $\cC'$ are such categories, a C$^*$-tensor functor from $\cC$ to $\cC'$ is given by $(F, F_0, F_2)$ consisting of:
\begin{itemize}
\item  a C$^*$-functor $F\colon \cC \to \cC'$,
\item a unitary morphism $F_0 \colon \un_{\cC'} \to F(\un_{\cC})$, and
\item a natural unitary transformation $F_2 \colon F(X) \otimes F(Y) \to F(X \otimes Y)$,
\end{itemize}
satisfying a standard set of compatibility conditions for $\lambda$, $\rho$, and $\alpha$ of $\cC$ and $\cC'$.
If  $F$ is an equivalence of categories, we say that the above C$^*$-tensor functor is a (unitary monoidal) equivalence of C$^*$-tensor categories.

A variant of Mac Lane's coherence theorem implies that any C$^*$-tensor category is equivalent to a \emph{strict} one, the latter being a C$^*$-tensor category where  $\lambda_X$, $\rho_X$, and $\alpha_{X,Y,Z}$ are all  given by identity morphisms.
In this paper we carry out various constructions for categories with nontrivial associator $\alpha$ but still with $\lambda_X$ and $\rho_X$ given by identity morphisms.
In practice, we still assume that the `starting point' is given by a strict C$^*$-tensor category, and deform it to another one in which $\alpha$ is given by scalar multiples of identity morphisms.

In a \emph{rigid} C$^*$-tensor category $\cC$ any object $X \in \obj(\cC)$ has a dual, that is, there exist $\bar{X} \in \obj(\cC)$ and $R \in \cC(\un, \bar{X} \otimes X)$, $\bar{R} \in \cC(\un, X \otimes \bar{X})$ satisfying the \emph{conjugate equation} for $X$:
\begin{align*}
(1_{\bar{X}} \otimes \bar{R}^*) \alpha_{\bar{X},X,\bar{X}} (R \otimes 1_{\bar{X}}) &= 1_{\bar{X}},&
(1_X \otimes R^*) \alpha_{X,\bar{X},X} (\bar{R} \otimes 1_X) &= 1_X.
\end{align*}
(Recall that we are assuming $\lambda_X = 1_X = \rho_X$ in $\cC$.)
The number $d(X) = \min_{(R, \bar{R})} \norm{R} \norm{\bar{R}}$ is called the \emph{intrinsic dimension} of $X$, where the minimum is taken over all pairs $(R, \bar{R})$ as above.
A solution $(R, \bar{R})$ satisfying $\norm{R} = d(X)^{1/2} = \norm{\bar{R}}$ is called a \emph{standard} solution, which we denote by $(R_X, \bar{R}_X)$.
Standard solutions are unique up to unitary morphisms.
It implies that for any $X \in \obj(\cC)$, the functional $\Tr_X$ on $\cC(X)$ defined by
$$
R^*_X \circ (1_{\bar{X}} \otimes x) \circ R_X = \Tr_X(x) 1_{\mathbbm 1} \quad (x \in \cC(X))
$$
is a tracial positive functional, called the \emph{canonical picture trace} or \emph{categorical trace}.
Having fixed $(R_X, \bar{R}_X)$ and $(R_Y, \bar{R}_Y)$, whenever $x \in \cC(X, Y)$ we denote by $x^\vee$ the morphism in $\cC(\bar{Y}, \bar{X})$ satisfying $(1_{\bar{X}} \otimes x) R_X = (x^\vee \otimes 1_Y) R_Y$.

Rigid C$^*$-tensor categories with simple units are automatically semisimple, and we work within this framework.

\subsection{Graded tensor categories}

In a C$^*$-category $\cC$, two full subcategories $\cD$ and $\cE$ are called \textit{mutually orthogonal} if $\cC (D,E) = \{0\}$ holds for all $D \in \obj (\cD)$ and $E \in \obj (\cE)$.
We write $\cC = \cD \oplus \cE$ if in addition any object in $\cC$ is isomorphic to a direct sum $D \oplus E$ for some $D \in \obj(\cD)$ and $E \in \obj(\cE)$, and say that $\cC$ is a direct sum of $\cD$ and $\cE$.
Of course, this has a straightforward extension to a family of subcategories $\{\cC_j \}_{j\in J}$.

\begin{defn}[cf.~\cite{MR2674592}]\label{grgrad}
Let $\Gamma$ be a group.
A C$^*$-tensor category $\cC$ is called \textit{$\Gamma$-graded} if there exists a collection of mutually orthogonal full subcategories $\{\cC_\gamma\}_{\gamma\in \Gamma}$ such that
\begin{enumerate}
\item $\cC = \bigoplus_{\gamma \in \Gamma} \cC_\gamma$, and
\item $X \otimes Y$ is isomorphic to an object in $\cC_{\gamma \eta}$ for all $\gamma, \eta \in \Gamma$, $X \in \obj(\cC_\gamma)$, and $Y \in \obj(\cC_\eta)$.
\end{enumerate}
\end{defn}

\begin{rem}\label{trivdual}
The tensor unit $\un$, being simple, must belong to $\obj(\cC_e)$ by conditions (i) and (ii) where $e$ is the unit of $\Gamma$.
It then follows that $\bar{X} \in \obj (\cC_{\gamma^{-1}})$ (if it exists) for all $X \in \obj (\cC_\gamma)$.
\end{rem}

Note that if $\cC$ is graded over $\Gamma$, the `support' of a grading defined as
$$
\{\gamma \in \Gamma \mid \cC_\gamma \text{ contains a nonzero object}\}
$$
is a subgroup of $\Gamma$.
Without losing generality we may assume that the support is always the entire group.
Let us also note that when $\cC$ is $\Gamma$-graded and $\pi\colon \Gamma \to \Lambda$ is a (surjective) group homomorphism, we obtain a $\Lambda$-grading on $\cC$ by setting $\cC_{\gamma'} = \bigoplus_{\pi(\gamma) = \gamma'} \cC_\gamma$ for $\gamma' \in \Lambda$.

Any rigid semisimple C$^*$-tensor category $\cC$ with simple unit admits a grading $\cC = \bigoplus_{\gamma \in \Ch(\cC)} \cC_\gamma$ over the \emph{universal grading group} $\Ch(\cC)$ (also called the \emph{chain group}) in the sense that any $\Gamma$-grading on $\cC$ is induced by a unique homomorphism $q \colon \Ch(\cC) \to \Gamma$ as above.

Concretely, $\Ch(\cC)$ is constructed as follows.
Fix a set $\cI = \Irr(\cC)$ of representatives from isomorphism classes of simple objects.
The elements of $\Ch(\cC)$ are represented by symbols $[X]$ for $X \in \cI$, different symbols possibly representing the same element, subject to the rule $[Z] = [X] [Y]$ whenever there is a nonzero morphism from $Z$ to $X\otimes Y$, or equivalently, when $Z$ is isomorphic to a subobject of $X \otimes Y$.
From this description it immediately follows that $[\un]$ is the unit of $\Ch(\cC)$, $[X]^{-1} = \left[ \bar{X} \right]$.
For $\gamma \in \Ch(\cC)$, define $\cC_\gamma$ as the full subcategory of $\cC$, whose objects decompose as direct sum of simple objects $X$ satisfying $[X] = \gamma$.
Clearly, $\{\cC_\gamma \}_{\gamma \in \Ch(\cC)}$ is a mutually orthogonal collection of subcategories satisfying conditions of Definition \ref{grgrad}.

\begin{expl}[Temperley--Lieb category]
\label{expl:TL-and-SUq2}
Consider the \emph{Temperley--Lieb category} $\cC^{\TL}_{\delta, \vlon}$ with modulus $\delta \geq 2$ and the associativity sign $\vlon \in \{\pm 1\}$, which can also be seen as the category of finite dimensional representations of the compact quantum group $\SU_q(2)$~\cite{MR890482}, with $q \in \R^\times$ satisfying $\abs{q + q^{-1}} = \delta$ and $\vlon = -\sgn(q)$.
The set of isomorphism classes of simple objects of $\cC^{\TL}_{\delta, \vlon}$ is countably infinite.
In fact, there is a set $\{X_n\}_{n\geq 0}$ of representatives of such classes indexed by nonnegative integers satisfying:
\begin{enumerate}
\item $X_0$ is the trivial object $\un$,
\item $X_m \otimes X_n  \cong X_{\abs {m-n} } \oplus X_{\abs {m-n} +2 } \oplus \cdots \oplus X_{(m+n) -2} \oplus X_{m+n}$, and
\item $\ol{X_m} \cong X_m$
\end{enumerate}
for all $m, n \geq 0$.
The parameters $\delta$ and $\vlon$ (which do not affect $\Ch(\cC^{\TL}_{\delta, \vlon})$) appear as follows: there is a morphism $R \colon X_0 \to X_1 \otimes X_1$, unique up to $\bT$, such that
\begin{align*}
R^* R &= \delta 1_{X_0},& (1_{X_1} \otimes R^*) \alpha_{X_1, X_1, X_1} (R \otimes 1_{X_1}) &= \vlon 1_{X_1}
\end{align*}
Condition (ii) implies that every $X_n$ is a subobject of $X_1^{\otimes n}$.
It follows that the universal grading group $\Ch(\cC^{\TL}_{\delta, \vlon})$ has to be cyclic generated by $[X_1]$.
Again, using condition (ii), one can easily show that $[X_n]$ is either equal to $[\un]$ or to $[X_1]$ according as $n$ is even or odd, which implies $\Ch(\cC^{\TL}_{\delta, \vlon}) \cong \Z/2\Z$.
\end{expl}

\subsection{Group and groupoid cohomology}

Let $\Gamma$ be a (discrete) group, and $M$ be a left $\Gamma$-module.
The group cochain complex $\rC^*(\Gamma; M)$ is given by the mapping spaces $\rC^n(\Gamma; M) = \Map(\Gamma^n, M)$ endowed with the coboundary maps $\delta^n \colon \rC^n(\Gamma; M) \to \rC^{n+1}(\Gamma; M)$ given by
\begin{multline}
\label{group-cocycle-cbdry}
\delta^n ( \phi ) ( \gamma_1, \gamma_2, \ldots, \gamma_{n + 1} ) = \gamma_1 \phi ( \gamma_2, \ldots, \gamma_{n + 1} ) \\
+ \sum^n_{i = 1} ( - 1 )^i \phi ( \gamma_1, \gamma_2, \ldots, \gamma_{i - 1}, \gamma_i \gamma_{i + 1}, \gamma_{i + 2}, \ldots, \gamma_{n + 1} ) + ( - 1 )^{n + 1} \phi ( \gamma_1, \ldots, \gamma_n  ).
\end{multline}
A cochain $\phi \in \rC^n(\Gamma; M)$ is  \emph{normalized} if $\phi(\gamma_1, \ldots, \gamma_n) = 0$ whenever one of $\gamma_i$ is $e$.
The subcomplex $\bar{\rC}^*(\Gamma; M)$ of normalized cochains have the same cohomology as $\rC^*(\Gamma; M)$.

Next let $\cG$ be a groupoid.
As usual let us denote by $\cG^{(0)}$ its object set, and by $\cG^{(n)}$ the set of $n$-tuples $(g_1, \ldots, g_n)$ of composable arrows.
Thus, there are maps $\dom$ and $\codom$ from $\cG = \cG^{(1)}$ to $\cG^{(0)}$ so that $g_1$ and $g_2$ are composable if and only if $\dom(g_1) = \codom(g_2)$.
We also identify $\cG^{(0)}$ as a subset of $\cG$ by the embedding $x \mapsto \id_x$.

When $M$ is a commutative group, the complex of normalized cochains on $\cG$ with coefficient $M$ is given by~\cite{MR0255771}
$$
\rnC^n(\cG; M) = \{ \psi\colon \cG^{(n)} \to M \mid \psi(g_1, \ldots, g_n) = 0 \text{ if } \exists i \colon g_i \in \cG^{(0)} \},
$$
together with the differential $\delta^n\colon \rnC^n(\cG; M) \to \rnC^{n+1}(\cG; M)$ given by
\begin{multline*}
\delta^n(\psi)(g_1, \ldots, g_{n+1}) = \psi(g_2, \ldots, g_{n+1}) + \sum_{i=1}^n (-1)^i \psi(g_1, \ldots, g_{i} g_{i+1}, \ldots, g_{n+1})\\
+ (-1)^{n+1} \psi(g_1, \ldots, g_n).
\end{multline*}
Its cohomology is denoted by $\rH^*(\cG; M)$.
Of course, we could have used the complex of non-normalized cochains, $\rC^n(\cG; M) = \Map(\cG^{(n)}, M)$, to compute $\rH^*(\cG; M)$.

We are particularly interested in the \emph{action groupoid} $\Gamma \ltimes X$ arising from an action of a group $\Gamma$ on a set $X$.
This groupoid has the object set $X$, and $\Gamma \ltimes X = \Gamma \times X$ as a set, with domain and codomain maps defined by $\dom(\gamma, x) = x$ and $\codom(\gamma, x) = \gamma x$, and composition given by $(\gamma, x) . (\eta, y) = (\gamma \eta, y)$ whenever $x = \eta y$.
In this case $\rnC^n(\cG; M)$ is nothing but the space $\rnC_\Gamma^n(X; M)$ of normalized equivariant cochains on the (discrete) set $X$ with coefficient in $M$, or what amounts to the same thing, the space $\rnC^n(\Gamma; \Map(X, M))$ of normalized cochains on $\Gamma$ with coefficient $\Map(X, M)$, where $\Gamma$ acts on $\Map(X, M)$ from left by $(\gamma f)(x) = f(\gamma^{-1} x)$.
An explicit cochain isomorphism $ \bar {\rC}^n (\cG; M) \ni \psi \mapsto \widetilde{\psi} \in \bar{\rC}^n(\Gamma; \Map(X,M)) $ is given by
\begin{equation}\label{group-groupoid-link}
\widetilde{\psi}[\gamma_1, \ldots , \gamma_n](x_0) = \psi (g_1, \ldots , g_n),
\end{equation}
where $g_i = (\gamma_i , x_i) \in \cG$ and $x_i = \gamma_i^{-1} x_{i-1}$ for all $1 \leq i \leq n$.

\subsection{Twisting monoidal categories by group \texorpdfstring{$3$}{3}-cocycles}

Again let $\Gamma$ be a group, and let $\omega$ be a cocycle in $\rnC^3(\Gamma; \bT)$.
So $\omega$ is a map from $\Gamma \times \Gamma \times \Gamma$ to $\bT$ satisfying
\[
\omega(\gamma_1,\gamma_2,\gamma_3) \omega(\gamma_1, \gamma_2 \gamma_3, \gamma_4)\omega(\gamma_2, \gamma_3, \gamma_4) = \omega(\gamma_1 \gamma_2, \gamma_3, \gamma_4) \omega(\gamma_1, \gamma_2, \gamma_3 \gamma_4) \quad (\gamma_i \in \Gamma),
\]
and $\omega(\gamma_1, \gamma_2, \gamma_3) = 1$ whenever at least one of $\gamma_i$ is the unit $e$ of $\Gamma$.

When $\cC$ is a $\Gamma$-graded C$^*$-tensor category, we can consider a new category $\cC^\omega$ which
\begin{itemize}
\item is same as $\cC$ as a C$^*$-category,
\item has the same tensor bifunctor and left and right unit constraints, but
\item has the associativity constraint $\alpha^\omega$ given as a twist of that of $\cC$ by the cocycle $\omega$:
\[
\alpha^\omega_{X_1,X_2,X_3} = \omega (\gamma_1,\gamma_2,\gamma_3)  \alpha_{X_1,X_2,X_3} \quad (X_i \in \obj (\cC_{\gamma_i})).
\]
\end{itemize}
The new associator $\alpha^\omega$ still satisfies the same compatibility conditions as $\alpha$ because $\omega$ is a normalized $3$-cocycle.

Note that $\cC^\omega$ has the same fusion rules as $\cC$, and in particular it inherits the $\Gamma$-grading of $\cC$.
However, typically they are monoidally equivalent (if and) only if $\omega$ is a coboundary.

\begin{prop}
\label{prop:tw-cat-rigid}
When $\cC$ is a rigid C$^*$-tensor category, $\cC^\omega$ is also rigid.
\end{prop}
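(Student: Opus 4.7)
The plan is to show that every object of $\cC^\omega$ has a dual, by lightly rescaling the solutions of the conjugate equations that already exist in $\cC$. Since $\cC$ is $\Gamma$-graded, any object is isomorphic to a finite direct sum $\bigoplus_\gamma X_\gamma$ with $X_\gamma \in \obj(\cC_\gamma)$, and duality distributes over direct sums (using block-diagonal $R$ and $\bar R$). So it is enough to produce, for each homogeneous object $X \in \obj(\cC_\gamma)$, a dual in $\cC^\omega$. By Remark~\ref{trivdual}, a candidate dual object is the same $\bar X \in \obj(\cC_{\gamma^{-1}})$ that serves as the dual in $\cC$; only the solution of the conjugate equations needs to be adjusted.

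Fix a standard solution $(R, \bar R)$ for $X$ in $\cC$, and look for $R^\omega = a R$, $\bar R^\omega = b \bar R$ with $a, b \in \bT$ satisfying the conjugate equations in $\cC^\omega$. Since $\alpha^\omega_{\bar X, X, \bar X} = \omega(\gamma^{-1}, \gamma, \gamma^{-1}) \alpha_{\bar X, X, \bar X}$ and $\alpha^\omega_{X, \bar X, X} = \omega(\gamma, \gamma^{-1}, \gamma) \alpha_{X, \bar X, X}$, substituting into
\begin{align*}
(1_{\bar X} \otimes (\bar R^\omega)^*) \alpha^\omega_{\bar X, X, \bar X} (R^\omega \otimes 1_{\bar X}) &= 1_{\bar X}, &
(1_X \otimes (R^\omega)^*) \alpha^\omega_{X, \bar X, X} (\bar R^\omega \otimes 1_X) &= 1_X
\end{align*}
and using the conjugate equations satisfied by $(R, \bar R)$ in $\cC$ yields the two scalar conditions $a \bar b \, \omega(\gamma^{-1}, \gamma, \gamma^{-1}) = 1$ and $\bar a b \, \omega(\gamma, \gamma^{-1}, \gamma) = 1$. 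Taking the complex conjugate of the second, both collapse to the single requirement $a \bar b = \overline{\omega(\gamma^{-1}, \gamma, \gamma^{-1})} = \omega(\gamma, \gamma^{-1}, \gamma)$.

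The heart of the argument is therefore the identity $\omega(\gamma, \gamma^{-1}, \gamma) \, \omega(\gamma^{-1}, \gamma, \gamma^{-1}) = 1$, which follows by plugging $(\gamma_1, \gamma_2, \gamma_3, \gamma_4) = (\gamma, \gamma^{-1}, \gamma, \gamma^{-1})$ into the $3$-cocycle relation
$$
\omega(\gamma_1, \gamma_2, \gamma_3) \, \omega(\gamma_1, \gamma_2 \gamma_3, \gamma_4) \, \omega(\gamma_2, \gamma_3, \gamma_4) = \omega(\gamma_1 \gamma_2, \gamma_3, \gamma_4) \, \omega(\gamma_1, \gamma_2, \gamma_3 \gamma_4)
$$
and noting that every factor involving the identity $e$ reduces to $1$ by normalization of $\omega$. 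Once this consistency is established, we may take $a = 1$ and $b = \omega(\gamma^{-1}, \gamma, \gamma^{-1})$, which exhibits $(\bar X, R, \omega(\gamma^{-1}, \gamma, \gamma^{-1}) \bar R)$ as a dual for $X$ in $\cC^\omega$. There is no real obstacle beyond spotting this cocycle identity; the rest is mechanical bookkeeping, and extension to arbitrary objects is by the direct-sum reduction of the first paragraph.
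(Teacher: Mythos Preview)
Your proof is correct and follows essentially the same approach as the paper: both reduce to homogeneous $X \in \cC_\gamma$, derive the key identity $\omega(\gamma,\gamma^{-1},\gamma)\,\omega(\gamma^{-1},\gamma,\gamma^{-1}) = 1$ from the $3$-cocycle relation on $(\gamma,\gamma^{-1},\gamma,\gamma^{-1})$ together with normalization, and then exhibit $(\bar X, R, \omega(\gamma^{-1},\gamma,\gamma^{-1})\bar R)$ as a dual in $\cC^\omega$. Your explicit direct-sum reduction to homogeneous objects is a small elaboration the paper leaves implicit, but otherwise the arguments coincide.
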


\begin{proof}
It is enough to see that for any $\gamma \in \Gamma$ and $X \in \cC_\gamma$, there is a dual object of $X$ in $\cC^\omega$.
By our standing assumption $X$ admits a dual object in $\cC$, say given by $(Y, R, \bar{R})$.
Then $Y$ is an object of $\cC_{\gamma^{-1}}$.
Moreover we have $\omega(\gamma, \gamma^{-1}, \gamma) = \bar{\omega}(\gamma^{-1}, \gamma, \gamma^{-1})$, since by the $3$-cocycle identity on $(\gamma, \gamma^{-1}, \gamma, \gamma^{-1})$ we have
$$
\omega(\gamma^{-1}, \gamma, \gamma^{-1}) \bar{\omega}(e, \gamma, \gamma^{-1}) \omega(\gamma, e, \gamma^{-1}) \bar{\omega}(\gamma, \gamma^{-1}, e) \omega(\gamma, \gamma^{-1}, \gamma) = 1
$$
and the terms involving $e$ are $1$ by the normalization condition on $\omega$.
Consequently $R' = R$ and $\bar{R}' = \omega(\gamma^{-1}, \gamma, \gamma^{-1}) \bar{R}$ satisfy
\begin{align*}
(1_{Y} \otimes \bar{R}^{\prime *}) \alpha^\omega_{Y, X, Y} (R' \otimes 1_{Y}) &= \bar{\omega}(\gamma^{-1}, \gamma, \gamma^{-1}) \omega(\gamma^{-1}, \gamma, \gamma^{-1}) 1_{Y} = 1_{Y},\\
(1_{X} \otimes R^{\prime *}) \alpha^\omega_{X, Y, X} (\bar{R}' \otimes 1_{X}) &= \omega(\gamma, \gamma^{-1}, \gamma) \omega(\gamma^{-1}, \gamma, \gamma^{-1}) 1_{X} = 1_X.
\end{align*}
Thus, the triple $(Y, R', \bar{R}')$ gives a dual of $X$ in $\cC^\omega$.
\end{proof}

\begin{expl}
The first basic example comes from pointed categories, that is, when the tensor product of every simple object with its dual is isomorphic to the unit object.
The category $\cC_\Gamma$ of finite dimensional $\Gamma$-graded Hilbert spaces is such an example.
Generally, the isomorphism classes of simple objects in a pointed category automatically becomes a group with respect to tensor product where the inverse is given by taking dual; this is the universal grading group.
In fact, any pointed C$^*$-tensor category is unitarily monoidally equivalent to $\cC_\Gamma^\omega$ for some group $\Gamma$ and a (normalized) $\bT$-valued $3$-cocycle $\omega$ on $\Gamma$.
\end{expl}

\section{Some formulas for nonstrict categories}
\label{sec:ann-alg-nonstr-cat}

\subsection{Annular algebras}

In~\cite{MR3447719}, annular algebras were defined assuming strictness of the tensor structure.
While the C$^*$-variant of Mac Lane's coherence theorem guarantees that there is no loss of generality in doing so, for our purposes it will be useful to work out concrete formulas for non-strict ones.
In this section $\cC$ denotes a rigid C$^*$-tensor category with simple unit, and we denote a set of representative of isomorphism classes of simple objects in $\cC$ by $\cI = \Irr(\cC)$.

Suppose that $\scX = \{X_j\}_{j \in \cJ}$ is a family of objects of $\cC$ which is \emph{full} in the sense that any $S \in \cI$ is isomorphic to a subobject of $X_j$ for some $j$.
In order to simplify the notation, let us use the index $j$ in place of $X_j$ when they appear in subscripts, so that $\alpha_{S, T, X_j}$ becomes $\alpha_{S, T, j}$ for example.

For $j , k \in \cJ$, let $\cA_{k ,j}$ denote the quotient of the vector space
\[
\bigoplus_{S \in \obj(\cC)} \cC(S \otimes X_{j}, X_{k} \otimes S)
\]
over the subspace generated by elements of the form
\begin{equation}
\label{eq:rel-in-ann-alg}
f (f' \otimes 1_{j}) - (1_{k} \otimes f')  f, \quad \left(f \in \cC (T \otimes X_{j}, X_{k} \otimes S), f' \in\cC (S,T)\right).
\end{equation}
Further, we will write $\cA^S_{k,j}$ for the image of $\cC (S \otimes X_{j}, X_{k} \otimes S)$ under the quotient map, and the map $\psi^S_{k,j}\colon \cC (S \otimes X_{j}, X_{k} \otimes S) \ra \cA^S_{k,j}$ will stand for the restriction of the quotient map.

The vector space $\cA_{k,j}$ can be identified with the direct sum $\bigoplus_{S \in \cI} \cC(S \otimes X_{j}, X_{k} \otimes S)$,  hence each $\psi^S_{k,j}$ is injective (thus bijective) for $S \in \cI$.
Indeed, for a general $S \in \obj(\cC)$ we can take an irreducible decomposition $S \cong \bigoplus_\alpha S_\alpha$ with $S_\alpha \in \cI$.
Let $v_\alpha \colon S_\alpha \to S$ be the corresponding isometry.
When $f \in \cC(S \otimes X_{j}, X_{k} \otimes S)$, we have $f = \sum_\alpha f (v_\alpha v_\alpha^* \otimes 1_j)$.
From the formula \eqref{eq:rel-in-ann-alg} with $f (v_\alpha \otimes 1_j)$ in place of $f$ and $v_\alpha^* \otimes 1_j$ in place of $f'$, we see that $f$ and $\sum_\alpha (1_k \otimes v_\alpha^*) f (v_\alpha \otimes 1_j)$ represent the same element in $\cA_{k,j}$.
This already shows that $\cA_{k,j}$ is a quotient of $\bigoplus_{S \in \cI} \cC(S \otimes X_{j}, X_{k} \otimes S)$.
Since for different $S, T \in \cI$ there is no nonzero morphism $f' \in \cC(S, T)$, the images of $\cC(S \otimes X_{j}, X_{k} \otimes S)$ for $S \in \cI$ are linearly independent.

Let us put $\cA = \cA(\scX) = \bigoplus_{j,k \in \cJ} \cA_{k,j}$, and define a bilinear operation $\bullet$ on $\cA$ characterized by
\begin{equation}\label{annmult}
\psi^S_{m,k} (f)  \mathbin{\bullet} \psi^T_{k',j} (f') = \delta_{k,k'} \psi^{S\otimes T}_{m,j} \left( \alpha_{m, S,T} (f \otimes 1_T) \alpha_{S, k, T}^{-1}  (1_S \otimes f') \alpha_{S, T, j} \right)
\end{equation}
for $f \in \cC (S \otimes X_{k}, X_{m} \otimes S)$ and $f' \in \cC (T \otimes X_{j}, X_{k'} \otimes T)$.
This is a straightforward adaptation of the product formula in~\cite{MR3447719}, by putting the associator $\alpha$ in appropriate places so that the overall formula makes sense in the current nonstrict setting.
Independence of the definition of $\bullet$ on the choice of $f$ and $f'$ easily follows from the naturality of $\alpha$.

\smallskip
The space $\cA_{k, j}$ can be presented in a more categorical way as follows.
Let $\iota \otimes X_{j}$ be the endofunctor of $\cC$ given by $Y \mapsto Y \otimes X_{j}$, and similarly consider $X_{k} \otimes \iota$.
Denote by $\finNat(\iota \otimes X_{j}, X_{k} \otimes \iota)$ the space of natural transformations $(f_Y \colon Y \otimes X_{j} \to X_{k} \otimes Y)_{Y \in \obj(\cC)}$ from the functor from $\iota \otimes X_{j}$ to $X_{k} \otimes \iota$, such that $f_X = 0$ for $X \in \cI$ except for finitely many of them.
Then, using semisimplicity of $\cC$, one can check that the map
$$
\finNat(\iota \otimes X_{j}, X_{k} \otimes \iota) \to \cA_{k, j},\quad
f \mapsto \sum_{S \in \cI} \psi^S_{k,j}(f_S)
$$
is a vector space isomorphism.
Moreover, $\bigoplus_{j,k \in \cJ} \finNat(\iota \otimes X_{j}, X_{k} \otimes \iota)$ can be equipped with a canonical algebra structure  $\bullet$ for which the above mentioned  map becomes an algebra isomorphism.
Concretely, for $f \in \finNat(\iota \otimes X_{k}, X_{m} \otimes \iota)$ and $f' \in \finNat(\iota \otimes X_{j}, X_{k} \otimes \iota)$, $f \mathbin{\bullet} f'$ is characterized by the following: given $X \in \cI$, the morphism $(f \bullet f')_X$ can be expressed as
$$
(f \mathbin{\bullet} f')_X = \sum_{\mathclap{\substack{Y,Z \in \cI,\\v_a\colon X \to Y \otimes Z}}} (1_{m} \otimes v_a^*) \alpha_{m,Y,Z} (f_Y \otimes 1_Z) \alpha^{-1}_{Y,k,Z} (1_Y \otimes g_Z) \alpha_{Y,Z,j} (v_a \otimes 1_{j}),
$$
where $v_a \colon X \to Y \otimes Z$ runs through an orthonormal basis of $\cC(X, Y \otimes Z)$ for each $Y$ and $Z$.
It follows that, for $f \in \cC (S \otimes X_{k}, X_{m} \otimes S)$ and $f' \in \cC (T \otimes X_{j}, X_{k} \otimes T)$, their product in terms of $\psi$ can be written as
\begin{equation*}
\psi^S_{m,k} (f) \mathbin{\bullet} \psi^T_{k,j} (f') = \sum_{U, w} \psi^{U}_{m,j} \left( \left(1_{m} \otimes w^*\right) \alpha_{m, S, T} (f \otimes 1_T)  \alpha_{S,k,T}^{-1} (1_S \otimes f') \alpha_{S, T, j} \left(w \otimes 1_{j}\right) \right),
\end{equation*}
where $U$ runs through $\cI$ and $w$ through an orthonormal basis of $\cC (U, S \otimes T)$.

\begin{lem}\label{ass}
$(\cA , \bullet)$ is an associative algebra.
\end{lem}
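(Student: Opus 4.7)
My plan is direct verification. I would take three elements $a = \psi^S_{n,m}(f)$, $b = \psi^T_{m,k}(g)$, $c = \psi^U_{k,j}(h)$ (if any intermediate index fails to match, both products vanish by the $\delta$-factor in \eqref{annmult}). Applying \eqref{annmult} twice, I compute $(a \bullet b) \bullet c = \psi^{(S\otimes T)\otimes U}_{n,j}(B)$ and $a \bullet (b \bullet c) = \psi^{S \otimes (T\otimes U)}_{n,j}(D)$, where
\begin{align*}
A &= \alpha_{n, S, T}(f \otimes 1_T)\, \alpha^{-1}_{S, m, T}\,(1_S \otimes g)\, \alpha_{S, T, k},\\
B &= \alpha_{n, S\otimes T, U}(A \otimes 1_U)\, \alpha^{-1}_{S\otimes T, k, U}\,(1_{S\otimes T} \otimes h)\, \alpha_{S \otimes T, U, j},\\
C &= \alpha_{m, T, U}(g \otimes 1_U)\, \alpha^{-1}_{T, k, U}\,(1_T \otimes h)\, \alpha_{T, U, j},\\
D &= \alpha_{n, S, T \otimes U}(f \otimes 1_{T \otimes U})\, \alpha^{-1}_{S, m, T\otimes U}\, (1_S \otimes C)\, \alpha_{S, T\otimes U, j}.
\end{align*}

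Since $B$ and $D$ live in morphism spaces with different parenthesizations of $S \otimes T \otimes U$ in both source and target, the next step is to invoke relation \eqref{eq:rel-in-ann-alg} with $f' = \alpha_{S, T, U}$ to bridge the two $\psi$-classes. This reduces the claimed equality in $\cA_{n, j}$ to the single morphism identity
\[
(1_n \otimes \alpha_{S, T, U}) \circ B = D \circ (\alpha_{S, T, U} \otimes 1_j)
\]
in $\cC$.

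To verify this identity I would apply the pentagon equation to the quadruple $(X_n, S, T, U)$ to absorb $(1_n \otimes \alpha_{S,T,U})$ into the outermost associators of $B$, and the pentagon on $(S, T, U, X_j)$ to symmetrically absorb $(\alpha_{S,T,U} \otimes 1_j)$ into $D$; pentagons on $(S, X_m, T, U)$ and $(S, T, X_k, U)$ then rewrite the associators $\alpha^{-1}_{S, m, T} \otimes 1_U$ and $\alpha_{S, T, k} \otimes 1_U$ appearing inside $A \otimes 1_U$; and finally naturality of $\alpha$ in each of its three arguments moves $f$, $g$, and $h$ across the resulting associators into the slots they occupy in $D$. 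The main obstacle is purely combinatorial: several associator factors must be shuffled in tandem with three morphism insertions, and careful bookkeeping of parenthesizations at each intermediate step is essential. A more conceptual alternative, which side-steps the explicit shuffling, is to invoke Mac Lane's coherence theorem: both $(1_n \otimes \alpha_{S,T,U}) B$ and $D (\alpha_{S,T,U} \otimes 1_j)$ are canonical composites built from the same interleaving of $f$, $g$, $h$ with associators on the same sequence of tensor factors, so coherence forces them to agree.
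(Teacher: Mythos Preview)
Your proposal is correct and follows essentially the same route as the paper: both compute the two iterated products as $\psi^{(S\otimes T)\otimes U}_{n,j}(B)$ and $\psi^{S\otimes(T\otimes U)}_{n,j}(D)$, use the defining relation \eqref{eq:rel-in-ann-alg} with $f'=\alpha_{S,T,U}$ to reduce to the morphism identity $(1_n\otimes\alpha_{S,T,U})B = D(\alpha_{S,T,U}\otimes 1_j)$, and then verify this by repeated pentagon identities and naturality of $\alpha$. The paper carries out one representative step in detail (the pentagon on $(S,T,X_k,U)$ after inserting associators via naturality), which matches one of the four pentagon applications you list; your optional appeal to Mac Lane coherence is a legitimate shortcut the paper does not invoke.
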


\begin{proof}
Take indices $j, k, m, n \in \cJ$, objects $S, T, U \in \obj(\cC)$, and morphisms
\begin{align*}
f &\in \cC(S \otimes X_{m}, X_{n} \otimes S), &
f' &\in \cC(T \otimes X_{k}, X_{m} \otimes T), &
f'' &\in \cC(U \otimes X_{j}, X_{k} \otimes U).
\end{align*}
On the one hand, $(\psi^S_{n, m}(f) \bullet \psi^T_{m, k}(f')) \bullet \psi^U_{k, j}(f'')$ is the image under $\psi^{(S \otimes T) \otimes U}_{n, j}$ of
\begin{equation}
\label{eq:bull-assoc-1}
\alpha_{n, S \otimes T, U}  \left((\alpha_{n, S,T} (f \otimes 1_T) \alpha_{S, m, T}^{-1}  (1_S \otimes f') \alpha_{S, T, k}) \otimes 1_U\right) \alpha_{S\otimes T, k, U} (1_{S \otimes T} \otimes f'') \alpha_{S \otimes T, U, j}.
\end{equation}

On the other, $\psi^S_{n, m}(f) \bullet (\psi^T_{m, k}(f') \bullet \psi^U_{k, j}(f''))$ is the image under $\psi^{S \otimes (T \otimes U)}_{n, j}$ of
\begin{equation}
\label{eq:bull-assoc-2}
\alpha_{n, S, T \otimes U} (f \otimes 1_{T \otimes U}) \alpha_{S,m, T \otimes U}^{-1} \left(1_S \otimes (\alpha_{m, T,U} (f' \otimes 1_U) \alpha_{T, k, U}^{-1}  (1_T \otimes f'') \alpha_{T, U, j}) \right) \alpha_{S, T \otimes U,j}.
\end{equation}
Using $\psi^{S \otimes (T \otimes U)}_{n,j}((1_{n} \otimes \alpha) f' (\alpha^{-1} \otimes 1_{j})) = \psi^{(S \otimes T) \otimes U}_{n,j}(f')$ together with the naturality and the pentagon equation for $\alpha$, we indeed obtain that \eqref{eq:bull-assoc-1} and \eqref{eq:bull-assoc-2} are equal.

Let us indicate the first step.
In the expression \eqref{eq:bull-assoc-1}, by the naturality of $\alpha$ we can insert $\alpha_{S, m \otimes T, U}^{-1}$ and $\alpha_{S, T \otimes k, U}$ around $(1_S \otimes f') \otimes 1_U$, and $\alpha_{S, T, k \otimes U}^{-1}$ and $\alpha_{S, T, U \otimes j}$ around $1_{S \otimes T} \otimes f''$.
Then between $(1_S \otimes (f' \otimes 1_U))$ and $(1_S \otimes (1_T \otimes f''))$ we have
$$
\alpha_{S, T \otimes k, U} (\alpha_{S, T, k} \otimes 1_U) \alpha_{S \otimes T, k, U}^{-1} \alpha_{S, T, k \otimes U}^{-1} \colon S \otimes (T \otimes (X_{k} \otimes U)) \to S \otimes ((T \otimes X_{k}) \otimes U),
$$
which is equal to $1_S \otimes \alpha_{T, k, U}^{-1}$ by the pentagon identity.
This is exactly what we have between in $f'$ and $f''$ in \eqref{eq:bull-assoc-2}.
The rest of the proof proceeds in a similar way.
\end{proof}

Let us next describe the $*$-structure on $\cA$.
We will denote this one by $\#$ in order to avoid confusion with the involution of morphisms of $\cC$.
Define a conjugate linear map $\# \colon \cA \ra \cA$ by sending $\psi^{S}_{k,j} (f) \in \cA^{S}_{k,j}$ to
\begin{equation}
\label{eq:invol-on-ann-alg}
\psi^{\bar{S}}_{j,k} \biggl( \Bigl( \bigl( (R_S^* \otimes 1_{j}) \alpha_{\bar{S}, S, j}^{-1} \bigr) \otimes 1_{\bar{S}} \Bigr) \bigl( ( 1_{\bar{S}} \otimes f^* ) \otimes 1_{\bar{S}} \bigr) \alpha_{\bar{S}, k \otimes S , \bar{S}}^{-1} \Bigl( 1_{\bar{S}} \otimes \bigl(\alpha_{k,S,\bar{S}}^{-1} ( 1_{k} \otimes \bar{R}_S) \bigr) \Bigr) \biggr)
\end{equation}
in $\cA^{\bar S}_{j,k}$, where $(R_S , \bar{R}_S)$ is a standard solution to conjugate equations for $S$.
Note that by the naturality of $\alpha$ and the quotient map $\psi$, this becomes well-defined as well as independent of the choice of $\bar{S}$ and $(R_S, \bar{R}_S)$.
Moreover, $\#$ is indeed an antimultiplicative involution; to see this, one argues along the same lines as in the proof of Lemma \ref{ass}, using the fact that $\{w_i^{\vee *}\}_i$ is an orthonormal basis of $\cC(\bar{U}, \bar{T} \otimes \bar{S})$ if $\{w_i\}_i$ is a one of $\cC(U, S \otimes T)$ with respect to a standard solution $(R_U, \bar{R}_U)$ for $U$, $(R_T, \bar{R}_T)$ for $T$, and the associated one
\begin{equation}
\label{eq:std-sol-for-prod-with-assoc}
\left(\alpha_{\bar T \otimes \bar S, S, T} (\alpha_{\bar T, \bar S, S}^{-1} \otimes 1_T) \bigl((1_{\bar T} \otimes R_S) \otimes 1_T \bigr) R_T, \alpha_{S \otimes T, \bar T, \bar S} (\alpha_{S, T, \bar T}^{-1} \otimes 1_{\bar S}) \bigl((1_S \otimes \bar{R}_T) \otimes 1_{\bar S} \bigr) \bar{R}_S \right)
\end{equation}
for $S \otimes T$.

\begin{prop}
\label{prop:ann-alg-indep-tensor-equiv}
Let $F \colon \cC \to \cC'$ be a unitary monoidal equivalence of rigid C$^*$-tensor categories with simple units.
Furthermore let $\scX = \{X_j\}_{j \in \cJ}$ be a full family in $\cC$, and put $\scX' = \{F(X_j)\}_{j \in \cJ}$.
Then $\cA(\scX)$ is isomorphic to $\cA(\scX')$.
\end{prop}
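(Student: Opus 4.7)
The plan is to construct an explicit isomorphism $\Phi \colon \cA(\scX) \to \cA(\scX')$ by transporting morphisms along the tensor structure of $F$. For an object $S \in \obj(\cC)$ and $f \in \cC(S \otimes X_j, X_k \otimes S)$, set
\[
\tilde f = F_2^{-1} \circ F(f) \circ F_2 \in \cC'\bigl(F(S) \otimes F(X_j), F(X_k) \otimes F(S)\bigr),
\]
and define $\Phi\bigl(\psi^S_{k,j}(f)\bigr) = \psi^{F(S)}_{k,j}(\tilde f)$. Naturality of $F_2$ sends a morphism of the form $f(f' \otimes 1_j) - (1_k \otimes f') f$ appearing in \eqref{eq:rel-in-ann-alg} to one of the same shape in $\cC'$ with $F(f')$ in place of $f'$, so $\Phi$ descends to the quotients. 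Bijectivity is then immediate: the identification $\cA_{k,j} \cong \bigoplus_{S \in \Irr(\cC)} \cC(S \otimes X_j, X_k \otimes S)$ from the paragraph following \eqref{eq:rel-in-ann-alg} reduces the claim to $F$ being fully faithful (so each summand maps isomorphically) and essentially surjective (so $\{F(S)\}_{S \in \Irr(\cC)}$ exhausts $\Irr(\cC')$ up to isomorphism).

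The crux is multiplicativity. Substituting $\tilde f$ for $f \in \cC(S \otimes X_k, X_m \otimes S)$ and $\tilde{f'}$ for $f' \in \cC(T \otimes X_j, X_k \otimes T)$ into the product formula \eqref{annmult} for $\cA(\scX')$, one obtains an expression interleaving $\alpha'_{F(\cdot), F(\cdot), F(\cdot)}$ with $F_2^{\pm 1}$, $F(f)$ and $F(f')$. The monoidal coherence of $F$, namely
\[
F(\alpha_{X,Y,Z}) \circ F_2 \circ (F_2 \otimes 1_{F(Z)}) = F_2 \circ (1_{F(X)} \otimes F_2) \circ \alpha'_{F(X), F(Y), F(Z)},
\]
lets us trade every $\alpha'$ in the product for an $F(\alpha)$ modulo conjugation by suitable $F_2$'s. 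After collecting terms, the adjacent pairs $F_2 F_2^{-1}$ cancel and what remains inside is $F$ applied to $\alpha_{m, S, T} (f \otimes 1_T) \alpha_{S, k, T}^{-1} (1_S \otimes f') \alpha_{S, T, j}$, flanked by the outer $F_2$ and $F_2^{-1}$. This is exactly $\Phi$ applied to $\psi^S_{m,k}(f) \bullet \psi^T_{k,j}(f')$. This associator bookkeeping will be the main obstacle, but is purely coherence-theoretic, of the same flavor as the proof of Lemma~\ref{ass}.

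Compatibility with $\#$ rests on the fact that a unitary monoidal equivalence transports standard solutions of the conjugate equations: if $(R_S, \bar R_S)$ is standard for $S$ in $\cC$, then $(F_2^{-1} F(R_S) F_0, F_2^{-1} F(\bar R_S) F_0)$ is standard for $F(S)$ in $\cC'$. Plugging these into \eqref{eq:invol-on-ann-alg} applied in $\cC'$, and once again pushing each $\alpha'$ across $F$ via the coherence identity above, one recovers $\Phi$ applied to \eqref{eq:invol-on-ann-alg} in $\cC$. Combining these three checks yields the desired isomorphism $\cA(\scX) \cong \cA(\scX')$.
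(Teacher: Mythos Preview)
Your proposal is correct and follows exactly the same approach as the paper: both define the isomorphism by $f \mapsto F_2^{-1} F(f) F_2$ on each summand and then verify the $*$-algebra structure is preserved. The paper's proof merely states that the verification is ``a tedious but straightforward computation,'' whereas you have outlined the three ingredients of that computation (well-definedness on the quotient via naturality of $F_2$, multiplicativity via the coherence identity between $F_2$ and the associators, and compatibility with $\#$ via transport of standard solutions), so your write-up is in fact more detailed than the original.
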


\begin{proof}
By definition $F$ comes with a C$^*$-functor $\cC \to \cC'$ (again denoted by the same symbol $F$) and a natural unitary transformation $F_2 \colon F(X) \otimes F(Y) \to F(X \otimes Y)$ satisfying compatibility conditions for the monoidal structures.
We then have a linear map
$$
\cC(S \otimes X_j, X_k \otimes S) \to \cC'(F(S) \otimes F(X_j), F(X_k) \otimes F(S)), \quad f \mapsto F_2^{-1} F(f) F_2
$$
which induces a linear map $\cA(\scX)_{k, j} \to \cA(\scX')_{k, j}$.
A tedious but straightforward computation yields that this is indeed a $*$-isomorphism.
\end{proof}

\begin{defn}[cf.~\citelist{\cite{MR1317353}\cite{MR1929335}\cite{MR3447719}}]
The $*$-algebra $(\cA(\scX), \bullet, \#)$ is called the \textit{annular algebra of $\cC$ associated with the family $\scX = \{X_j\}_{j \in \cJ}$}.
When $\scX = \Irr \cC$, a set of representatives of the simple objects, we call $\cT = \cT(\cC) = \cA(\scX)$ the \emph{tube algebra} of $\cC$.
\end{defn}

The results of~\cite{MR3447719} apply to our construction by Proposition~\ref{prop:ann-alg-indep-tensor-equiv}.
Let us summarize some important consequences:
\begin{itemize}
\item any element of $\cA(\scX)$ has a uniform bound on the norm under the $*$-representations of $\cA$; hence $\cA$ admits a universal C$^*$-envelope.
\item there is a canonical faithful positive trace on $\tau \colon \cA \to \C$ given by
\[
\cA^{S}_{k,j} \ni \psi^{S}_{k,j} (f) \mapsto \delta_{j, k} \sum_{w} \Tr_{j} \bigl( \left(1_{j} \otimes w^* \right) f \left(w \otimes 1_{j} \right)\bigr),
\]
where $w$ runs through an orthonormal basis of $\cC (\un, S)$.
(We suppressed the structure morphisms for $\un$ as before.)
Note that simplicity of $\un$ is crucially used here, and the definition of $\tau$ is indeed independent of the choice of the orthonormal basis.
\item the subspaces $\cA^S_{k, j}$ of $\cA_{k, j}$ for different $S \in \cI$ are orthogonal with respect to the inner product defined by $\lab a_1 , a_2 \rab = \tau \bigl(a_1 \bullet a^\#_2\bigr)$.
\item if $\scX = \{X_j\}_{j \in \cJ}$ and $\scY = \{Y_{j'}\}_{j' \in \cJ'}$ are two full families, the algebras $\cA(\scX)$ and $\cA(\scY)$ are strongly Morita equivalent.
\end{itemize}

\subsection{Half-braiding and monad}
\label{sec:ind-half-br-monad}

When $X$ is an object of a C$^*$-tensor category $\cC$, a unitary half-braiding on $X$ is a natural family of unitary morphisms $c_Y \colon Y \otimes X \to X \otimes Y$ for $Y \in \obj(\cC)$ such that
\begin{equation}
\label{eq:half-br-mult-formula}
\alpha_{X, Y, Z} (c_Y \otimes 1_Z) \alpha_{Y, X, Z}^{-1} (1_Y \otimes c_Z) \alpha_{Y, Z, X} = c_{Y \otimes Z}.
\end{equation}
A morphism of half-braidings from $(X, c)$ to $(X', c')$ is a morphism $x$ from $X$ to $X'$ (in $\cC$) which satisfies $(x \otimes 1_Y) c_Y = c'_Y (1_Y \otimes x)$ for all $Y$.
The category $\cZ(\cC)$ of the pairs $(X, c)$ as above is the (unitary) \emph{Drinfeld center}.

Recall that we can enlarge $\cC$ to the ind-category $\indcat{\cC}$ whose objects are \emph{direct limits} $\varinjlim X_i$ for some inductive system $(X_i, v_{ji})_{i,j \in \Lambda}$ of objects labeled over a directed set $\Lambda$, with connecting \emph{isometries} $v_{ji} \colon X_i \to X_j$ for $i < j$~\cite{MR3509018}.
For semisimple $\cC$, with a representative of irreducible classes $\cI$ as before, such a limit can always be represented as `infinite direct sum' $\bigoplus_{S \in \cI} H_S \otimes S$.
Here $H_S \otimes S$ is an amplification of $S$ by a Hilbert spaces $H_S$, and the morphism space of two such direct sums is concretely given by
$$
\Mor_{\indcat{\cC}} \Bigl( \bigoplus_{S \in \cI} H_S \otimes S, \bigoplus_{S \in \cI} H'_S \otimes S \Bigr) = \ell_\infty\mhyph\prod_{S \in \cI} B(H_S, H'_S),
$$
where the right hand side denotes the space of uniformly bounded sequences of operators with respect the operator norm.

Then the category $\indcat{\cC}$ is again a (non-rigid) C$^*$-tensor category, and we can again form the Drinfeld center category $\cZ(\indcat{\cC})$, which is often the `correct' model of quantum double of $\cC$ when the number of simple classes are infinite.
As explained in~\citelist{\cite{MR1782145}\cite{DOI:10.1093/imrn/rnw304}}, the category of $*$-representations of the tube algebra $\cT(\cC)$ is equivalent to $\cZ(\indcat{\cC})$ as a C$^*$-category.

\smallskip
The structure of the annular algebra can be understood using  the \emph{monadic} formulation of the quantum double~\citelist{\cite{MR1966525}\cite{MR2869176}}.
Since there is a slight issue of encoding the relevant structures only using uniformly bounded families of morphisms, let us consider purely algebraic direct sums, so that we just present our construction on each direct summand.
Given $X \in \obj(\indcat{\cC})$, consider $\cZ(X) = \bigoplus_{S \in \cI}^\alg \bar{S} \otimes (X \otimes S)$ as a direct sum `without completion over the $S$'.
Then the monoidal structure of $\cC$ induces natural transformations $\mu\colon \cZ^2 \to \cZ$ and $\eta\colon \Id_\cC \to \cZ$ satisfying certain set of conditions analogous to monoids~\cite{MR1712872}.
To be specific, let us fix a standard solution $(R_S, \bar{R}_S)$ for each $S \in \cI$, and also an orthonormal basis $(w_i)_i$ of $\cC(U, S \otimes T)$ for each $S, T, U \in \cI$.
Then the part of $\mu$ from the summand $\bar{T} \otimes ((\bar{S} \otimes (X \otimes S)) \otimes T)$ to $\bar{U} \otimes (X \otimes U)$ is given by
\begin{equation}
\label{eq:monad-prod-prec-formula}
\sum_i (w_i^\vee \otimes (1_X \otimes w_i^*)) (1_{\bar T \otimes \bar S} \otimes \alpha_{X,S,T}) \alpha_{\bar T, \bar S, (X \otimes S) \otimes T}^{-1} (1_{\bar T} \otimes \alpha_{\bar{S}, X \otimes S, T}),
\end{equation}
where $w_i^\vee$ is calculated using standard solutions $(R_T, \bar{R}_T)$, $(R_U, \bar{R}_U)$, and \eqref{eq:std-sol-for-prod-with-assoc}.

An ind-object $X$ with a (non-unitary) half braiding $c$ admits a structure of $\cZ$-module, that is, a morphism $\cZ(X) \to X$ which is compatible with $\mu_X\colon \cZ^2(X) \to \cZ(X)$.
Concretely, the module structure is given by the collection of
$$
(1_X \otimes R_S^*) \alpha_{X \bar S, S} (c_{\bar S} \otimes 1_S) \alpha_{\bar S, X, S}^{-1} \colon \bar S \otimes (X \otimes S) \to X.
$$
The (non-unitary) Drinfeld center can be identified with the category $\cZ\modcat$ of $\cZ$-modules, which contain objects of the form $\cZ(X)$ as `free' $\cZ$-modules.
We have natural isomorphisms $\cA_{k, j} \cong \cC(X_{j}, \cZ(X_{k}))$, and the usual adjunction
$$
\cC(X_{j}, \cZ(X_{k})) \cong \Mor_{\cZ\modcat}(\cZ(X_{j}), \cZ(X_{k}))
$$
induces a $*$-algebra structure on $\cA$ from that of $\End_{\cZ\modcat}(\bigoplus_{j \in \cJ} \cZ(X_{j}))$, which is precisely the structure of tube algebra.

\section{Twisting and annular algebras}
\label{sec:twist-ann-alg}

Let $\cC$ be a $\Gamma$-graded rigid C$^*$-tensor category with simple unit.
For simplicity, we will assume that it is strict.
Given a normalized $3$-cocycle $\omega$ on $\Gamma$, our goal is to describe the tube (and more generally annular) algebra of $\cC^\omega$ in terms of that of $\cC$.
Let $\scX$ be a family of objects in $\cC$.
Since both $\cC$ and $\cC^\omega$ are identical as semisimple C$^*$-categories, $\cA = \cA(\scX)$ admit two different $*$-algebra structures.
Our goal is to extend the computation of~\cite{MR3699167} about $\cC_{\Gamma}^\omega$ to $\cC^\omega$ when $\scX$ is compatible with the grading as we specify below.
In order to avoid confusion we denote by $\bullet$ and $\#$ (resp.~by $\bbull$ and $\star$) the multiplication and the involution on $\cA$ induced by $\cC$ (resp.~by $\cC^\omega$).

\subsection{Fell bundle structure of the tube algebra}

As before, let $\cI$ be a set of representatives of isomorphism classes of simple objects.
Let us denote the map $\cI \to \Gamma$ corresponding to the $\Gamma$-grading by $S \mapsto \gamma_S$.

\begin{defn}
Let $\cC$ be a $\Gamma$-graded category.
A family of objects $\scX = \{ X_j \}_{j \in \cJ}$ of $\cC$ is \emph{adapted} to the grading when each $X_j$ is isomorphic to an object of $\cC_{\gamma_j}$ for some $\gamma_j \in \Gamma$.
\end{defn}

\begin{expl}
For example, $\cI$ is adapted for a trivial reason.
Another important example comes from the representation of a compact quantum group $G$.
Let $\{U_i\}_i$ be a set of representatives of irreducible representations, and $H_i$ be the underlying Hilbert space of $U_i$.
Then the family $\scX = \{ \bar{H}_i \otimes U_i \}_i$ in $\Rep G$ is adapted to the grading.
The associated annular algebra $\cA(\scX)$ is isomorphic to the Drinfeld double of $G$~\cite{arXiv:1511.06332}.
\end{expl}

We start with giving several convenient direct sum decompositions of the annular algebra $\cA = \cA(\scX)$ for an adapted family $\scX$.
Recall that we have direct sum decompositions
\begin{align*}
\cA &= \bigoplus_{j, k \in \cJ} \cA_{k, j},&
\cA_{k, j} &= \bigoplus_{S \in\cI} \cC (S \otimes X_j, X_k \otimes S)
\end{align*}
as vector spaces.
Note that the direct summand $\cA^S_{k, j} = \cC (S \otimes X_j, X_k \otimes S)$ is nonzero only if both $S \otimes X_j$ and $X_k \otimes S$ have subobjects isomorphic to a common element in $\cI$, which will then imply $\gamma_S \gamma_j = \gamma_k \gamma_S$.
Thus, if $\cA^S_{k, j}$ is nonzero then $\gamma_j$ and $\gamma_k$ are conjugate to each other implemented by $\gamma_S^{\pm 1}$.
This leads us to the following constructions.
For $\gamma, \eta, s \in \Gamma$ such that $s \gamma = \eta s$, set
\begin{align*}
\cA^s_{\eta, \gamma} &= \bigoplus_{\mathclap{\substack{j, k \in \cJ, S \in\cI \\ \gamma_j = \gamma, \gamma_k = \eta,\\ \gamma_S = s}}} \cA^S_{k, j}, &
\cA_{\eta, \gamma} &= \bigoplus_{\mathclap{t\colon t \gamma = \eta t}} \cA^t_{\eta, \gamma}.
\end{align*}
Finally, let $\Sigma$ be the set of conjugacy classes of $\Gamma$, and define $\cA_\sigma = \bigoplus_{\gamma,\eta \in \sigma} \cA_{\eta, \gamma}$ for $\sigma \in  \Sigma$.

\begin{lem}
\label{lem:direct-sum-over-conjug-classes}
The $*$-algebra $(\cA, \bullet, \#)$ decompose into the direct sum $\bigoplus_{\sigma \in \Sigma} \cA_\sigma$ of $*$-subalgebras.
Thus, every representation of the tube algebra is an orthogonal direct sum of representations of $\cA_\sigma$ for $\sigma \in \Sigma$.
\end{lem}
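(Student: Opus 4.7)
The plan is to verify three bookkeeping facts: (a) the decomposition $\cA=\bigoplus_{\sigma\in\Sigma}\cA_\sigma$ is indeed a vector space direct sum, (b) each $\cA_\sigma$ is closed under $\bullet$ and $\#$, and (c) $\cA_\sigma\bullet\cA_{\sigma'}=0$ whenever $\sigma\neq\sigma'$. All three follow from tracking the $\Gamma$-grading through the definitions, so the argument is essentially combinatorial with no analytic obstacles.

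For (a), I will start from the observation already present in the setup: if $\cA^S_{k,j}\neq 0$ then $S\otimes X_j$ and $X_k\otimes S$ share a common simple subobject, forcing $\gamma_S\gamma_j=\gamma_k\gamma_S$, so $\gamma_j$ and $\gamma_k$ are conjugate. Consequently the only nonzero summands $\cA_{k,j}$ are those with $\gamma_j\sim\gamma_k$, so grouping $(j,k)$ by the common conjugacy class $\sigma$ yields $\cA=\bigoplus_{\sigma\in\Sigma}\cA_\sigma$ as vector spaces.

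For (b), closure under $\bullet$ uses \eqref{annmult}: the product $\psi^S_{m,k}(f)\bullet\psi^T_{k',j}(f')$ vanishes unless $k=k'$, in which case the result lies in $\cA^{S\otimes T}_{m,j}$. If $\psi^S_{m,k}(f)$ lives in $\cA^s_{\gamma_m,\gamma_k}$ (so $s\gamma_k=\gamma_m s$) and $\psi^T_{k,j}(f')$ lives in $\cA^t_{\gamma_k,\gamma_j}$ (so $t\gamma_j=\gamma_k t$), then $(st)\gamma_j=\gamma_m(st)$, so upon decomposing $S\otimes T$ into simples $U$ with $\gamma_U=st$ one lands in $\bigoplus_U\cA^U_{m,j}\subseteq\cA_{\gamma_m,\gamma_j}$; the entire calculation takes place within the conjugacy class of $\gamma_k=\gamma_j\sim\gamma_m$. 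Closure under $\#$ is even more immediate: the defining formula \eqref{eq:invol-on-ann-alg} sends $\cA^S_{k,j}$ into $\cA^{\bar S}_{j,k}$, and $\gamma_{\bar S}=\gamma_S^{-1}$ turns the relation $\gamma_S\gamma_j=\gamma_k\gamma_S$ into $\gamma_{\bar S}\gamma_k=\gamma_j\gamma_{\bar S}$, keeping the conjugacy class unchanged.

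For (c), cross-products $\cA_\sigma\bullet\cA_{\sigma'}$ again vanish unless the matching middle index $k$ exists, but then $\gamma_k$ belongs to both $\sigma$ and $\sigma'$, forcing $\sigma=\sigma'$. Combining (a)--(c) shows that each $\cA_\sigma$ is a $*$-subalgebra and the different $\cA_\sigma$ mutually annihilate under $\bullet$, so they are two-sided $*$-ideals with $\cA=\bigoplus_\sigma\cA_\sigma$. The final clause about representations is then a routine consequence: for any $*$-representation $\pi$ of $\cA$, the self-adjoint central idempotents obtained (in the multiplier sense, as $\cA$ is non-unital) from the summand projections give an orthogonal decomposition of the representation space into $\pi(\cA_\sigma)$-invariant subspaces. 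The only point that deserves a line of comment is this non-unitality issue; I will handle it by noting that each finite sum of the local units $1_{X_j}\in\cA_{\gamma_j,\gamma_j}$ lies in a single $\cA_\sigma$, so the approximate units for the different $\cA_\sigma$ are mutually orthogonal, yielding the claimed orthogonal decomposition of any representation.
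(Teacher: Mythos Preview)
Your proof is correct and follows essentially the same approach as the paper: track the $\Gamma$-grading through the multiplication formula \eqref{annmult} and the involution formula \eqref{eq:invol-on-ann-alg} to see that each $\cA_\sigma$ is a $*$-ideal. The paper's version is terser and leaves the representation-theoretic consequence as an immediate ``Thus'', whereas you spell out the approximate-unit argument for the non-unital case; that extra care is fine but not strictly needed here.
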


\begin{proof}
By the above discussion we have $\cA = \bigoplus_{\sigma \in \Sigma} \cA_\sigma$ as a vector space.
The multiplication formula \ref{annmult} tells us that $\cA_\sigma$ is closed under $\bullet$, and elements from different conjugacy classes are orthogonal.
More precisely $\cA_{t \eta t^{-1}, \eta}^t \cA_{s \gamma s^{-1}, \gamma}^s$ contains a nonzero element (if and) only if $\eta = s \gamma s^{-1}$, in which case it is in $\cA_{t s \gamma s^{-1} t^{-1}, \gamma}^{t s}$.
From the explicit formula, we can see that $\#$ sends elements of $\cA_{s \gamma s^{-1},\gamma}^s$ to $\cA_{\gamma, s \gamma s^{-1}}^{s^{-1}}$, so $\cA_\sigma$ is stable under involution.
\end{proof}

\begin{rem}
\label{rem:center-graded}
In terms of the Drinfeld center, the above direct sum decomposition corresponds to the fact that any object $(X, c)$ of $\cZ(\indcat{\cC})$ decomposes as a direct sum $\bigoplus_{\sigma \in \Sigma} (X_\sigma, c^\sigma)$ for some objects $X_\sigma \in \indcat{\bigoplus_{\gamma \in \sigma} \cC_\gamma}$ and unitary half braidings $c^\sigma$ on the $X_\sigma$.
Indeed, we can take a decomposition $X \cong \bigoplus_\gamma X_\gamma$ as an object in $\indcat{\cC}$ and put $X_\sigma = \bigoplus_{\gamma \in \sigma} X_\gamma$.
If $Y$ is an object in $\cC_\eta$, the morphism $c_Y$ from $\bigoplus_\sigma Y \otimes X_\sigma$ to $\bigoplus_\sigma X_\sigma \otimes Y$ has to be diagonally represented in $\sigma$, since there are no nontrivial morphisms between objects of $\indcat{\cC_{\eta \gamma}}$ and of $\indcat{\cC_{\gamma' \eta}}$ if $\gamma$ and $\gamma'$ are not conjugate in $\Gamma$.
Thus, each $X_\sigma$ inherits a half braiding $c^\sigma$ by restricting $c$.
\end{rem}

Let $\cG = \Gamma \ltimes_{\Ad} \Gamma$ be the action groupoid of $\Gamma$ acting on itself by the adjoint action.
The above argument shows that $\cA$ is a \emph{Fell bundle over $\cG$}~\citelist{\cite{MR1103378}\cite{MR1443836}}: to each $g = (s, \gamma)$ we have the direct summand $\cA_g = \cA_{s \gamma s^{-1}, \gamma}^s$ such that the $*$-algebra structure of $\cA = \bigoplus_{g \in \cG} \cA_g$ is compatible with the groupoid structure of $\cG$, in the sense that $\cA_{g} \cA_{g'} \subset \cA_{g g'}$ for $(g, g') \in \cG^{(2)}$ and $(\cA_g)^\# = \cA_{g^{-1}}$.

From now on, we will assume that the adapted family $ \mathscr X $ is full.
\begin{prop}
\label{prop:Ta-is-full-corner-in-TSigma}
Let $\sigma$ be a conjugacy class of $\Gamma$.
For any $a \in \sigma$, $\cA_{a, a}$ is strongly Morita equivalent to $\cA_\sigma$ via the completion of $\bigoplus_{b \in \sigma} \cA_{a, b}$ inside $\cA_\sigma$.
\end{prop}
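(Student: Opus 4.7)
The approach is to exhibit $\cA_{a,a}$ as a full corner in $\cA_\sigma$. Concretely I construct a projection $p_a$ in the multiplier algebra of the universal $C^*$-envelope of $\cA_\sigma$ satisfying $p_a \bullet \cA_\sigma \bullet p_a = \cA_{a,a}$ and such that the two-sided ideal it generates is dense. The Morita imprimitivity bimodule is then the norm closure of $\cA_\sigma \bullet p_a = \bigoplus_{b \in \sigma} \cA_{a,b}$.

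\emph{Local units and the corner.} For each $j \in \cJ$ set $e_j := \psi^{\un}_{j,j}(1_{X_j}) \in \cA^{\un}_{j,j}$. In the strict setting of this section, the formulas \eqref{annmult} and \eqref{eq:invol-on-ann-alg} immediately yield $e_j^\# = e_j$, $e_j \bullet e_j = e_j$, and $e_j$ is a two-sided identity for the pieces of $\cA$ indexed by $j$. The formal sum $p_a := \sum_{j : \gamma_j = a} e_j$ is a strictly continuous projection in the multiplier algebra, and from the unit property one gets $p_a \bullet \cA_\sigma \bullet p_a = \cA_{a,a}$ on the nose.

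\emph{Density of the generated ideal, main step.} The elements $\{e_k\}_{k \in \cJ}$ form a system of local units in $\cA_\sigma$ (that is, $\beta \bullet e_k = \beta$ for $\beta \in \cA_{k', k}$), so it is enough to show that every $e_k$ with $\gamma_k = b \in \sigma$ lies in $\cA_\sigma \bullet p_a \bullet \cA_\sigma$. I reduce this to producing a finite family $\zeta_i \in \cA_{j_i, k}$, $\gamma_{j_i} = a$, satisfying $\sum_i \zeta_i^\# \bullet \zeta_i = e_k$: since each $\zeta_i = e_{j_i} \bullet \zeta_i \in p_a \bullet \cA_\sigma$, the desired containment follows.

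\emph{Construction of the $\zeta_i$.} Fix $s \in \Gamma$ with $s b s^{-1} = a$; using the standing assumption that the grading is supported on all of $\Gamma$, pick a simple $S \in \cI \cap \cC_s$. Decompose $S \otimes X_k \cong \bigoplus_\mu U_\mu^{\oplus n_\mu}$ into simples $U_\mu \in \cI \cap \cC_{sb}$. For each $\mu$, Frobenius reciprocity supplies a simple $V_\mu \in \cI \cap \cC_a$ appearing in $U_\mu \otimes \bar S \in \cC_a$, equivalently with $U_\mu \hookrightarrow V_\mu \otimes S$. Fullness of $\scX$ then provides $j_\mu \in \cJ$ with $\gamma_{j_\mu} = a$ and an isometric embedding $i_\mu \colon V_\mu \hookrightarrow X_{j_\mu}$. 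Choosing orthonormal bases $\{v_{\mu, \ell}\}_\ell$ of $\cC(U_\mu, S \otimes X_k)$ and $\{w_{\mu, \kappa}\}_\kappa$ of $\cC(U_\mu, V_\mu \otimes S)$, the composites $f_{\mu, \ell, \kappa} := (i_\mu \otimes 1_S) w_{\mu, \kappa} v_{\mu, \ell}^*$ lie in $\cC(S \otimes X_k, X_{j_\mu} \otimes S)$. With the normalization $\zeta_{\mu, \ell, \kappa} := d(S)^{-1/2} d(U_\mu)^{1/2} \psi^S_{j_\mu, k}(f_{\mu, \ell, \kappa})$, a direct calculation using \eqref{annmult}, \eqref{eq:invol-on-ann-alg}, and the conjugate equations for $S$ (whereby the projection of $\bar S \otimes S$ onto its $\un$-summand through $R_S$ contributes a factor of $d(S)$) yields $\sum_{\mu, \ell, \kappa} \zeta_{\mu, \ell, \kappa}^\# \bullet \zeta_{\mu, \ell, \kappa} = e_k$.

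\emph{Main obstacle.} The delicate point is the bookkeeping of quantum-dimension factors in this last computation: one must arrange the choices of $V_\mu$, $i_\mu$, and the orthonormal bases so that the contributions from nontrivial simple summands of $\bar S \otimes S$ cancel and the $\un$-summand telescopes to exactly $1_{X_k}$. This is the same Frobenius-reciprocity calculation underlying the strong Morita equivalence of $\cA(\scX)$ and $\cA(\scY)$ for different full families established in~\cite{MR3447719}, now performed within the single Fell-bundle fibre $\cA_\sigma$ over the conjugacy class $\sigma$.
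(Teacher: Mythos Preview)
Your overall strategy---realising $\cA_{a,a}$ as the corner $p_a \cA_\sigma p_a$ and proving fullness by exhibiting each local unit $e_k$ inside $\cA_\sigma p_a \cA_\sigma$---is exactly what the paper does. The gap is in your explicit elements $\zeta_{\mu,\ell,\kappa}$: with only \emph{one} simple $V_\mu$ chosen inside $U_\mu \otimes \bar S$ and the normalisation $d(S)^{-1/2} d(U_\mu)^{1/2}$, the identity $\sum \zeta^\# \bullet \zeta = e_k$ fails. Using the conjugate equation $(1_S \otimes R_S^*)(\bar R_S \otimes 1_S)=1_S$ to collapse the zig-zag, one finds that the morphism underlying $\zeta_{\mu,\ell,\kappa}^\# \bullet \zeta_{\mu,\ell,\kappa}$ involves $f^*_{\mu,\ell,\kappa} f_{\mu,\ell,\kappa} = v_{\mu,\ell} v_{\mu,\ell}^*$, which is independent of $\kappa$; the sum over $\kappa$ therefore contributes only a multiplicity factor $m_\mu = \dim \cC(U_\mu, V_\mu \otimes S)$. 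For simple $X_k$ the $\un$-component of your sum becomes $d(S)^{-2} d(X_k)^{-1}\bigl(\sum_\mu m_\mu n_\mu\, d(U_\mu)^2\bigr)\, 1_{X_k}$, and this equals $1_{X_k}$ only when $\sum_\mu m_\mu n_\mu\, d(U_\mu)^2 = d(S)^2 d(X_k)$. Already for $\cC = \Rep \SU(2)$ with $S = X_k = X_2$ (and any admissible choice of the $V_\mu$, which here all give $m_\mu = 1$) the left side is $1+9+25=35$ while the right is $27$, so your asserted ``telescoping'' does not occur.

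The paper sidesteps this by decomposing $S \otimes X_j \otimes \bar S$ directly rather than going through $S \otimes X_j$ and Frobenius reciprocity. For each isometry $v\colon T \hookrightarrow S \otimes X_j \otimes \bar S$ onto a simple $T$ it sets
\[
f^{(v)} = d(S)^{-1/2}(R_S^* \otimes 1_{j \bar S})(1_{\bar S} \otimes v), \qquad
g^{(v)} = d(S)^{-1/2}(v^* \otimes 1_S)(1_{S j} \otimes R_S);
\]
then the completeness relation $\sum_v v v^* = 1_{S \otimes X_j \otimes \bar S}$ gives $\sum_v \psi^{\bar S}(f^{(v)}) \bullet \psi^S(g^{(v)}) = d(S)^{-1}\psi^{\bar S S}(R_S^* \otimes 1_j \otimes R_S) = \psi^\un(1_j)$ immediately, with no cancellation of non-trivial summands to arrange. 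Replacing your $\zeta$'s by (the images under an embedding $T \hookrightarrow X_{k'}$) of these $g^{(v)}$ repairs the argument.
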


\begin{proof}
Because of Proposition \ref{prop:ann-alg-indep-tensor-equiv} we may assume that $\cC$ is strict.
Let $b$ be an element of $\sigma$, and $j \in \cJ$ be any index satisfying $\gamma_j = b$.
The assertion holds if we can show that $\cA_{b, a} \cA_{a, b}$ contains the unit of $\cA_{j,j} $.
In fact, we claim that if $S$ is any irreducible object such that $a \gamma_S = \gamma_S b$, then the unit of $\cA_{j,j}$ is contained in the span of $\cA^{\bar{S}}_{j, k} \cA^S_{k, j}$ where $k$ runs through the indices of $\cJ$ satisfying $\gamma_k = \gamma_S \gamma_j \gamma_S^{-1}$.

Let $v\colon T \to S \otimes X_j \otimes \bar{S}$ be an isometry giving a irreducible subobject of $S \otimes X_j \otimes \bar{S}$, and put
\begin{align*}
f^{(v)} &= d(S)^{-\frac{1}{2}} (R_S^* \otimes 1_{j \otimes \bar{S}}) (1_{\bar{S}} \otimes v) \in \cC (\bar S \otimes T , X_j \otimes \bar S) ,\\ 
g^{(v)} &= d(S)^{-\frac{1}{2}} (v^* \otimes 1_{S}) (1_{S \otimes j} \otimes R_S) \in \cC (S \otimes X_j , T \otimes S),
\end{align*}
where $ R_S $ is a part of a standard solution to the conjugate equation of the dual pair $ (S , \bar S) $.
Then, by the product formula \ref{annmult}, we get
\[
\psi^{\bar S}_{j,T} (f^{(v)}) \bullet \psi^S_{T,j} (g^{(v)}) = d(S)^{-1} \psi^{\bar S \otimes S}_{j,j} \left( (R^*_S \otimes 1_{j \otimes \bar S \otimes S} ) (1_{\bar S } \otimes v v^* \otimes 1_S ) (1_{\bar S \otimes S \otimes j} \otimes R_S ) \right)
\]
Taking the summation on $v$ over a maximal system of orthogonal embeddings of irreducible subobjects of $S \otimes X_j \otimes \bar{S}$, we obtain
\[
\sum_v \psi^{\bar S}_{j,T} (f^{(v)}) \bullet \psi^S_{T,j} (g^{(v)}) = d(S)^{-1} \psi^{\bar S \otimes S}_{j,j} (R_S^*  \otimes 1_j \otimes R_S) = \psi^{\mathbbm 1}_{j,j} (1_j) .
\]
Since $ \mathscr X $ is full, each $T$, being irreducible, must be a subobject of some $ X_k \in \mathscr X $.
Again, $ \mathscr X $ is adapted and $ v $ is nonzero; so, indeed $ \gamma_k = \gamma_S \gamma_j \gamma_S^{-1}$.
\end{proof}

\begin{rem}
The above proposition shows that this bundle satisfies the assumption of~\cite{MR2446021}, namely,  for each $g = (s, \gamma)$ in $\cG$, $\cA_{g}$ is an imprimitivity bimodule between $\cA_{\dom(g)} = \cA_{\gamma, \gamma}^e$ and $\cA_{\codom(g)} = \cA_{s \gamma s^{-1}, s \gamma s^{-1}}^e$.
See \cite{MR3749336} for further implications on the structure of the C$^*$-envelope of $\cA$.
\end{rem}

\subsection{Tube algebra of twisted categories}

Continuing to denote $\cA = \cA(\scX)$ for a full and adapted family $\scX = \{X_j\}_{j \in \cJ}$, consider $\psi^S_{k,j'} (f_1) \in \cA^S_{k, j'}$ and $\psi^T_{j,i} (f_2) \in \cA^T_{j, i}$ for some indices $i, j, j',k \in \cJ$ and irreducible objects $S, T \in\cI$.
From the comparison of \eqref{annmult} for $\cC$ and $\cC^\omega$, we have
\[
\psi^S_{k, j'}(f_1) \bbull \psi^T_{j, i} (f_2) = \delta_{j,j'} \omega(\gamma_k, \gamma_S, \gamma_T) \bar{\omega}(\gamma_S, \gamma_j, \gamma_T) \omega(\gamma_S, \gamma_T, \gamma_i) \psi^S_{k, j}(f_1) \bullet \psi^T_{j, i} (f_2).
\]
Note that this equation is meaningful only when $\gamma_S$ (resp.~$\gamma_T$) conjugates $\gamma_{j^\prime}$ to $\gamma_k$ (resp.~$\gamma_i$ to $\gamma_j$); otherwise, both sides will become zero by the discussion preceding Lemma~\ref{lem:direct-sum-over-conjug-classes}.
In terms of the $\Gamma$-grading, we may rewrite the above equation as
\begin{equation}\label{eq:twistmult}
\chi_1 \bbull \chi_2 = \delta_{\gamma_2, g'_2} \omega(\gamma_3, s, t) \bar{\omega}(s, \gamma_2, t) \omega (s, t, \gamma_1) \chi_1  \bullet \chi_2
\end{equation}
for all $\chi_1 \in \cA^s_{\gamma_3, g'_2}$ and $\chi_2 \in \cA^t_{\gamma_2, \gamma_1}$.

Similarly, combining \eqref{eq:invol-on-ann-alg} for the category $\cC^\omega$ with Proposition \ref{prop:tw-cat-rigid}, we see that the $*$-structures are related by
\begin{equation}
\label{eq:twist-invol}
\chi^\star = \bar{\omega}(s^{-1}, s, \gamma_1) \bar{\omega}(s^{-1}, \gamma_2 s, s^{-1}) \bar{\omega}(\gamma_2, s, s^{-1}) \omega(s^{-1}, s, s^{-1}) \chi^\#
\end{equation}
for $\chi \in \cA_{\gamma_2, \gamma_1}^s$.

\begin{prop}[cf.~\cite{MR2443249}*{Theorem 3}]
Let us regard $\Gamma$ as a $\Gamma$-set by the adjoint action.
The normalized equivariant cochain $\Psi \in \rnC_\Gamma^2(\Gamma; \T)$ defined by
$$
\Psi[s, t](\gamma) = \omega(\gamma, s, t) \bar{\omega}(s, s^{-1} \gamma s, t) \omega(s, t, t^{-1} s^{-1} \gamma s t),
$$
is a cocycle.
\end{prop}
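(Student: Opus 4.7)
The plan is to verify the group $2$-cocycle identity for $\Psi$ as an element of $\bar{\rC}^2(\Gamma; \Map(\Gamma, \bT))$ (equivalently, the groupoid $2$-cocycle identity via \eqref{group-groupoid-link}). Translating the differential \eqref{group-cocycle-cbdry} multiplicatively, and using that $\Gamma$ acts on $\Map(\Gamma, \bT)$ by $(s f)(\gamma) = f(s^{-1}\gamma s)$, this reduces to showing
\begin{equation*}
\Psi[s_2, s_3](s_1^{-1}\gamma s_1)\, \Psi[s_1, s_2 s_3](\gamma) = \Psi[s_1 s_2, s_3](\gamma)\, \Psi[s_1, s_2](\gamma)
\end{equation*}
for all $s_1, s_2, s_3, \gamma \in \Gamma$.

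I would introduce the shorthand $\eta = s_1^{-1}\gamma s_1$, $\zeta = s_2^{-1}\eta s_2 = (s_1 s_2)^{-1}\gamma(s_1 s_2)$, and $\xi = s_3^{-1}\zeta s_3$, together with the conjugation identities $\gamma s_1 = s_1 \eta$, $\eta s_2 = s_2 \zeta$, $\zeta s_3 = s_3 \xi$. Expanding both sides of the target using the definition of $\Psi$ yields a product of twelve $\omega^{\pm 1}$-values. The core of the proof is to apply the $3$-cocycle identity for $\omega$ to the four quadruples
\begin{equation*}
(\gamma, s_1, s_2, s_3), \quad (s_1, \eta, s_2, s_3), \quad (s_1, s_2, \zeta, s_3), \quad (s_1, s_2, s_3, \xi).
\end{equation*}
The conjugation identities ensure that the $\omega$-values produced by these four relations align across pairs---for instance, the first two both feature $\omega(\gamma s_1, s_2, s_3) = \omega(s_1 \eta, s_2, s_3)$, and the third features $\omega(s_1, s_2 \zeta, s_3) = \omega(s_1, \eta s_2, s_3)$. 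Forming the ratio of the two sides of the target identity and substituting the four relations, every $\omega$-factor cancels pairwise and the ratio collapses to $1$.

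Normalization of $\Psi$ is immediate: if either $s$ or $t$ equals $e$, each of the three $\omega$-factors in the definition of $\Psi[s, t](\gamma)$ has $e$ in one of its three arguments, and hence equals $1$ by the normalization of $\omega$. The principal obstacle is purely combinatorial bookkeeping: one must choose the four quadruples above so that, after invoking the conjugation identities, the twelve $\omega$-values pair up correctly. No further input beyond the $3$-cocycle identity for $\omega$ and its normalization is needed.
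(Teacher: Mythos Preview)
Your proposal is correct and follows essentially the same route as the paper's proof: both verify the multiplicative $2$-cocycle identity by expanding the twelve $\omega$-factors and applying the $3$-cocycle relation to precisely the four quadruples you list (in the paper's notation these are $(\gamma, s, t, u)$, $(s, s^{-1}\gamma s, t, u)$, $(s, t, t^{-1}s^{-1}\gamma s t, u)$, and $(s, t, u, u^{-1}t^{-1}s^{-1}\gamma s t u)$), after which the resulting eight terms cancel in pairs via the conjugation identities you record.
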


\begin{proof} By adapting \eqref{group-cocycle-cbdry} in the context of the adjoint action of $\Gamma$  on $X = \Gamma$, we see that a normalized equivariant $2$-cocycle in $\rnC_\Gamma^2 (\Gamma; \T)$ is a map $\Xi\colon \Gamma \times \Gamma \rightarrow \Map( \Gamma, \T )$ such that  
$$
\Xi[t, u](s^{-1} \gamma s) \ol{\Xi[s t, u](\gamma)} \Xi[s, t u](\gamma) \ol{\Xi[s, t](\gamma)} = 1.
$$
Expanding the left hand side for $\Xi = \Psi$, we obtain
\begin{equation*}
\begin{split}
&\uwave{\omega(s^{-1} \gamma s, t, u)} \uuline{\bar{\omega}(t, t^{-1} s^{-1} \gamma s t, u)} \dashuline{\omega(t, u, u^{-1} t^{-1} s^{-1} \gamma s t u)}\\
&\times \dotuline{\bar{\omega}(\gamma, s t, u)} \uuline{\omega(s t, (s t)^{-1} \gamma s t, u)} \dashuline{\bar{\omega}(s t, u, u^{-1} (s t)^{-1} \gamma s t u)} \\
&\times \dotuline{\omega(\gamma, s, t u)} \uwave{\bar{\omega}(s, s^{-1} \gamma s, t u)} \dashuline{\omega(s, t u, (t u)^{-1} s^{-1} \gamma s t u)}\\
&\times \dotuline{\bar{\omega}(\gamma, s, t)} \uwave{\omega(s, s^{-1} \gamma s, t)} \uuline{\bar{\omega}(s, t, t^{-1} s^{-1} \gamma s t)}.
\end{split}
\end{equation*}
By the $3$-cocycle relation, the terms with:
\begin{itemize}
\item dotted underline give $\omega(s, t, u) \bar{\omega}(\gamma s, t, u)$,
\item dashed underline give $\omega(s, t, t^{-1} s^{-1} \gamma s t u) \bar{\omega}(s, t, u)$,
\item wave underline give $\omega(\gamma s, t, u) \bar{\omega}(s, s^{-1} \gamma s t, u)$, and those with
\item double underline give $\omega(s, s^{-1} \gamma s t, u) \bar{\omega} ( s, t, t^{-1} s^{-1} \gamma s t u  )$.
\end{itemize}
Multiplying these terms we indeed obtain $1$.
\end{proof}

The corresponding groupoid $2$-cocycle on $\cG = \Gamma \ltimes_{\Ad} \Gamma$ (via Equation \ref{group-groupoid-link}) is given by 
\begin{equation}
\label{eq:groupoid-2-cocycle-prec-formula}
\psi(g_1, g_2) = \Psi[s_1, s_2](s_1 g_1 s_1^{-1}) = \omega(s_1 g_1 s_1^{-1}, s_1, s_2) \bar\omega(s_1, g_1, s_2) \omega(s_1, s_2, g_2)
\end{equation}
for $(g_1, g_2) \in \cG^{(2)}$ with $g_i = (s_i, g_i)$.
Thus, we can twist any Fell bundle $A$ over $\cG$ by $\psi$ to another Fell bundle $A_\psi = \langle a^{(\psi)} \mid a \in A_\gamma, \gamma \in \cG \rangle$ by
$$
a^{(\psi)} b^{(\psi)} = \psi(g_1, g_2) (a b)^{(\psi)} \quad ((g_1, g_2) \in \cG^{(2)}, a \in A_{g_1}, b \in A_{g_2}).
$$
Note however that $\psi$ is normalized only in the sense that $\psi(g_1, g_2) = 1$ when either $g_1$ or $g_2$ is in $\cG^{(0)}$.
Using this, one can easily check that $a^{(\psi)*} = \bar{\psi}(g^{-1}, g) a^{*(\psi)}$ for $a \in A_{g}$ gives a $*$-structure.
\begin{thm}
The $*$-algebra $(\cA, \bbull, \star)$ can be regarded as the $2$-cocycle twist of the Fell bundle $(\cA, \bullet, \#)$ by the $2$-cocycle $\psi$ on $\cG$.
\end{thm}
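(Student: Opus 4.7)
The plan is to verify directly the two defining compatibilities of a $2$-cocycle twist at the level of the Fell bundle decomposition $\cA = \bigoplus_{g \in \cG} \cA_g$ coming from Lemma~\ref{lem:direct-sum-over-conjug-classes}. Concretely, for each $g = (s, \gamma) \in \cG$ I identify $\cA_g = \cA^s_{s\gamma s^{-1}, \gamma}$, and I must check that the formulas
\begin{align*}
\chi_1 \bbull \chi_2 &= \psi(g_1, g_2)\, \chi_1 \bullet \chi_2
&
(\chi_i \in \cA_{g_i},\ (g_1,g_2) \in \cG^{(2)}),\\
\chi^\star &= \bar\psi(g^{-1}, g)\, \chi^\#
&
(\chi \in \cA_g)
\end{align*}
hold for the scalar $\psi$ defined by~\eqref{eq:groupoid-2-cocycle-prec-formula}.

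The multiplication identity is essentially immediate. For composable $g_1 = (s, \gamma_2)$ and $g_2 = (t, \gamma_1)$ (with $\gamma_2 = t\gamma_1 t^{-1}$), the formula~\eqref{eq:groupoid-2-cocycle-prec-formula} yields $\psi(g_1,g_2) = \omega(s\gamma_2 s^{-1}, s, t)\, \bar\omega(s,\gamma_2,t)\, \omega(s,t,\gamma_1)$, which upon setting $\gamma_3 = s\gamma_2 s^{-1}$ reproduces exactly the scalar appearing in~\eqref{eq:twistmult}. Since equation~\eqref{eq:twistmult} was derived by directly comparing the product formula~\eqref{annmult} for $\cC$ and $\cC^\omega$, the multiplicative part of the claim drops out.

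The involutive identity is the one requiring a short computation; this is the main obstacle, though a routine one. For $\chi \in \cA^s_{\gamma_2, \gamma_1} = \cA_g$ with $g = (s, \gamma_1)$, one has $g^{-1} = (s^{-1}, \gamma_2)$, and a direct substitution into~\eqref{eq:groupoid-2-cocycle-prec-formula} gives
\[
\bar\psi(g^{-1}, g) = \bar\omega(\gamma_1, s^{-1}, s)\, \omega(s^{-1}, \gamma_2, s)\, \bar\omega(s^{-1}, s, \gamma_1).
\]
Comparing with~\eqref{eq:twist-invol}, the desired equality $\chi^\star = \bar\psi(g^{-1}, g) \chi^\#$ reduces, after cancelling the common factor $\bar\omega(s^{-1}, s, \gamma_1)$, to the scalar identity
\[
\omega(\gamma_1, s^{-1}, s)\, \omega(s^{-1}, s, s^{-1}) = \omega(s^{-1}, \gamma_2 s, s^{-1})\, \omega(\gamma_2, s, s^{-1})\, \omega(s^{-1}, \gamma_2, s).
\]
I will derive this by two applications of the $3$-cocycle identity~\eqref{group-cocycle-cbdry}: first on the $4$-tuple $(s^{-1}, \gamma_2, s, s^{-1})$, which after normalization yields the right-hand side equal to $\omega(\gamma_1 s^{-1}, s, s^{-1})$ (using $s^{-1}\gamma_2 = \gamma_1 s^{-1}$), and then on $(\gamma_1, s^{-1}, s, s^{-1})$, which likewise gives $\omega(\gamma_1, s^{-1}, s)\, \omega(s^{-1}, s, s^{-1}) = \omega(\gamma_1 s^{-1}, s, s^{-1})$.

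Putting the two pieces together shows that $(\cA, \bbull, \star)$ coincides with the $\psi$-twist of the Fell bundle $(\cA, \bullet, \#)$ on the nose.
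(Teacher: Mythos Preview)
Your proof is correct and follows essentially the same strategy as the paper: the multiplicative compatibility is read off directly from~\eqref{eq:twistmult} and~\eqref{eq:groupoid-2-cocycle-prec-formula}, and the involutive compatibility is reduced to a scalar identity verified by two applications of the $3$-cocycle relation together with normalization. The only cosmetic difference is the choice of $4$-tuples: the paper applies the cocycle relation to $(s^{-1}, s\gamma_1, s^{-1}, s)$ and $(s\gamma_1, s^{-1}, s, s^{-1})$, whereas you use $(s^{-1}, \gamma_2, s, s^{-1})$ and $(\gamma_1, s^{-1}, s, s^{-1})$; both routes land on the same intermediate term $\omega(\gamma_1 s^{-1}, s, s^{-1})$ (equivalently $\bar\omega(s\gamma_1, s^{-1}, s)$).
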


\begin{proof}
Take $g_i = (s_i, \gamma_i)$ ($i=1, 2$) in $\cG$ such that $(g_1, g_2) \in \cG^{(2)}$.
The product map $\cA_{g_1} \times \cA_{g_2} \to \cA_{g_1 g_2}$ in $(\cA, \bullet)_\psi$ is different from $\bullet$ by the factor of \eqref{eq:groupoid-2-cocycle-prec-formula}.
Since $\cA_{g_1} = \cA_{s_1 \gamma_1 s_1^{-1}, \gamma_1}^{s_1}$ and $\cA_{g_2} = \cA_{\gamma_1, \gamma_2}^{s_2}$, this is indeed the factor in \eqref{eq:twistmult}.

It remains to compare the involutions.
If $g = (s, \gamma_1) \in \cG$, the involution of $(\cA, \bullet, \#)_\psi$ on the summand $\cA_g$ is different from $\#$ by the factor of
$$
\bar{\psi}(g^{-1}, g) = \bar{\Psi}[s^{-1}, s](\gamma_1) = \bar{\omega}(\gamma_1, s^{-1}, s) \omega(s^{-1}, s \gamma_1 s^{-1}, s) \bar{\omega}(s^{-1}, s, \gamma_1).
$$
On the other hand, $\star$ on $\cA_g$ is different from $\#$ by the factor of
$$
\bar{\omega}(s^{-1}, s, \gamma_1) \bar{\omega}(s^{-1}, s \gamma_1, s^{-1}) \bar{\omega}(s \gamma_1 s^{-1}, s, s^{-1}) \omega(s^{-1}, s, s^{-1}).
$$
By the $3$-cocycle condition of $\bar\omega$ on quadruple $(s^{-1}, s \gamma_1, s^{-1}, s)$, we have
\begin{multline*}
\bar\omega(s \gamma_1, s^{-1}, s) \bar\omega(s^{-1}, s \gamma_1, s^{-1}) = \bar\omega(\gamma_1, s^{-1}, s)  \omega(s^{-1}, s \gamma_1 s^{-1}, s) \bar\omega(s^{-1}, s \gamma_1, e)\\
= \bar\omega(\gamma_1, s^{-1}, s) \omega(s^{-1}, s \gamma_1 s^{-1}, s).
\end{multline*}
Eliminating these terms and the common factors $\bar{\omega}(s^{-1}, s, \gamma_1)$ and $\bar{\omega}(s^{-1}, s \gamma_1, s^{-1})$, we are reduced to checking
$$
\bar{\omega}(s \gamma_1 s^{-1}, s, s^{-1}) \omega(s^{-1}, s, s^{-1}) = \bar{\omega}(s \gamma_1, s^{-1}, s).
$$
This equality follows from the $3$-cocycle condition (and normalization condition) of $\bar\omega$ on the quadruple $(s \gamma_1, s^{-1}, s, s^{-1})$.
\end{proof}

Let us also present a normalized version.

\begin{lem}
When $\psi$ is a $2$-cocycle on $\cG$ satisfying $\psi(g, \dom(g)) = 1 = \psi(\codom(g), g)$, we have $\psi(g, g^{-1}) = \psi(g^{-1}, g)$ for any $g \in \cG$.
\end{lem}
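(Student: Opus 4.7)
The plan is to apply the $2$-cocycle identity to a well-chosen triple of composable arrows in $\cG$ and exploit the normalization hypothesis on $\psi$ to cancel the middle terms. The natural candidate is the triple $(g, g^{-1}, g)$, which is composable for any $g \in \cG$ since $\dom(g) = \codom(g^{-1})$ and $\codom(g) = \dom(g^{-1})$, and whose partial products are the two units attached to $g$.

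Concretely, writing the groupoid $2$-cocycle relation $\delta^2(\psi) = 0$ multiplicatively for the triple $(g, g^{-1}, g)$ yields
$$
\psi(g^{-1}, g)\,\psi(g, g^{-1} g) = \psi(g g^{-1}, g)\,\psi(g, g^{-1}).
$$
Since $g^{-1} g = \dom(g)$ and $g g^{-1} = \codom(g)$, the normalization hypothesis $\psi(g, \dom(g)) = 1 = \psi(\codom(g), g)$ collapses the two middle factors to $1$, leaving $\psi(g^{-1}, g) = \psi(g, g^{-1})$, which is the desired identity.

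The argument has no real obstacle; the only thing to verify is that the identity we applied is the correct specialization of the coboundary formula $\delta^n(\psi)(g_1, g_2, g_3) = \psi(g_2, g_3) - \psi(g_1 g_2, g_3) + \psi(g_1, g_2 g_3) - \psi(g_1, g_2)$ from the preliminaries (in additive notation), together with the fact that $(g, g^{-1}, g) \in \cG^{(3)}$ so that all four evaluations of $\psi$ make sense.
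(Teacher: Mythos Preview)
Your proof is correct and is essentially identical to the paper's own argument: both apply the $2$-cocycle identity to the composable triple $(g, g^{-1}, g) \in \cG^{(3)}$ and use the normalization hypothesis to kill the two terms $\psi(\codom(g), g)$ and $\psi(g, \dom(g))$. There is no meaningful difference in approach.
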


\begin{proof}
Using the $2$-cocycle condition on $(g, g^{-1}, g) \in \cG^{(3)}$, we obtain
$$
\psi(g^{-1}, g) \bar{\psi}(\codom(g), g) \psi(g, \dom(g)) \bar{\psi}(g, g^{-1}) = 1.
$$
Since $\psi$ is normalized, the middle two terms are trivial.
\end{proof}

It follows that we can choose a function $\xi \colon \cG \to \bT$ satisfying
$$
\psi(g, g^{-1}) = \xi(g)^2, \quad \xi(g) = \xi(g^{-1}), \quad \xi(x) = 1 \quad (x \in \cG^{(0)}).
$$
Then the cohomologous cocycle $\psi' = \psi . \delta^1 \bar{\xi}$ satisfies
\begin{gather*}
\psi'(g, g^{-1}) = \psi(g, g^{-1}) \bar{\xi}(g^{-1}) \bar{\xi}(g) \xi(\codom(g)) = 1, \\
\psi'(g, \dom(g)) = \psi(g, \dom(g)) = 1 = \psi'(\codom(g), g)
\end{gather*}
for any $g \in \cG$.
Thus, $\psi'$ is a normalized $2$-cocycle in the stronger sense, and $(\cA, \bbull, \star)$ is isomorphic to the twisting of $(\cA, \bullet, \#)$ by $\psi'$ by the map $\chi \mapsto \xi(g) \chi$ on $\cA_g$.

\medskip
Up to strong Morita equivalence, we can pick up one element from each orbit of $\cG$ (conjugacy class of $\Gamma$) and look at the stabilizers.
In our setting, this means that choosing $a \in \sigma$ for each $\sigma \in \Sigma$, and considering the centralizer subgroup $C_\Gamma(a) = \{\gamma \in \Gamma \mid a \gamma = \gamma a\}$ in  $\Gamma$.
Then $\psi$ induces a $2$-cocycle on $C_\Gamma(a)$,
$$
\vphi_a (s, t) =  \omega (a, s, t) \bar{\omega}(s, a, t) \omega (s, t, a),
$$
see~\citelist{\cite{MR1426907}\cite{MR3699167}*{Lemma~2.1}}.

The Fell bundle structure on $\cA$ implies that the $*$-algebras $(\cA_{a,a}, \bullet, \#)$ and $(\cA_{a,a}, \bbull, \star)$ are graded over $C_\Gamma(a)$.

\begin{cor}
\label{cor:cocycle-twist-over-centralizer}
Let $a$ be an element of $\Gamma$.
Then $(\cA_{a,a}, \bbull, \star)$ is isomorphic to the twist of the $C_\Gamma(a)$-graded $*$-algebra $(\cA_{a,a}, \bullet, \#)$ by the normalized $2$-cocycle $\vphi_a' = \vphi_a . d \bar{\xi}$.
\end{cor}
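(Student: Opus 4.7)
The plan is to restrict the main theorem above to the isotropy subgroup of $\cG = \Gamma \ltimes_{\Ad} \Gamma$ at the vertex $a$. The stabilizer of $a$ under the adjoint action consists of those $(s, a)$ with $s a s^{-1} = a$, so the map $s \mapsto (s, a)$ identifies $C_\Gamma(a)$ with the isotropy group $\cG^a_a$. Under this identification the direct sum $\cA_{a,a} = \bigoplus_{s \in C_\Gamma(a)} \cA^s_{a,a}$ is precisely the part of the Fell bundle $(\cA, \bullet, \#)$ sitting over $\cG^a_a$, and the inclusion $\cA_g \bullet \cA_{g'} \subset \cA_{g g'}$ restricts to a $C_\Gamma(a)$-grading of $(\cA_{a,a}, \bullet, \#)$ as a $*$-algebra. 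This is the grading referred to in the statement.

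Next I would compute the restriction of the groupoid $2$-cocycle $\psi$ from \eqref{eq:groupoid-2-cocycle-prec-formula} to this isotropy group. Taking $g_i = (s_i, a)$ with $s_i \in C_\Gamma(a)$, the equality $s_1 a s_1^{-1} = a$ collapses the formula to
\[
\psi(g_1, g_2) = \omega(a, s_1, s_2) \bar\omega(s_1, a, s_2) \omega(s_1, s_2, a) = \vphi_a(s_1, s_2).
\]
Since $2$-cocycle twisting of a Fell bundle is local on $\cG$ in the obvious sense, the main theorem restricted to the isotropy subgroupoid at $a$ says that $(\cA_{a,a}, \bbull, \star)$, viewed as the $C_\Gamma(a)$-graded $*$-algebra, is isomorphic to the twist of $(\cA_{a,a}, \bullet, \#)$ by $\vphi_a$.

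For the normalized version, I would restrict the function $\xi\colon \cG \to \bT$ chosen after the lemma to $\xi_a(s) = \xi((s,a))$ on $C_\Gamma(a)$. The three conditions on $\xi$ then descend to $\vphi_a(s, s^{-1}) = \xi_a(s)^2$, $\xi_a(s) = \xi_a(s^{-1})$, and $\xi_a(e) = 1$, so the argument just after the lemma applies verbatim: the cohomologous cocycle $\vphi_a' = \vphi_a \cdot d\bar{\xi}_a$ is normalized in the stronger sense, and the map $\chi \mapsto \xi_a(s)\chi$ for $\chi \in \cA_{a,a}^s$ implements the desired isomorphism between $(\cA_{a,a}, \bbull, \star)$ and the $\vphi_a'$-twist of $(\cA_{a,a}, \bullet, \#)$. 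There is no real obstacle in this argument; the only piece that requires care is verifying that the coboundary $d\bar{\xi}_a$ in the group cohomology $\bar{\rC}^*(C_\Gamma(a); \bT)$ is compatible with the groupoid coboundary $\delta^1 \bar{\xi}$ when restricted to $\cG^a_a$, which is immediate from \eqref{group-groupoid-link} applied to the single orbit point $a$.
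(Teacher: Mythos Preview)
Your argument is correct and matches the paper's intended route: the corollary is stated without proof precisely because it is the restriction of the main theorem and the subsequent normalization discussion to the isotropy group $\cG^a_a \cong C_\Gamma(a)$, and you have spelled out exactly those details. The computation that $\psi$ restricts to $\vphi_a$ and that $\xi$ restricts to the $\xi$ appearing in $\vphi_a' = \vphi_a \cdot d\bar\xi$ is the whole content, and you have it right.
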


\begin{rem}
Since the $3$-cocycle $\omega$ of $\Gamma$ is normalized, the $2$-cocycle $\vphi_e$ on $C_\Gamma(e) = \Gamma$ turns out to be constant function $1$.
Consequently, if we consider the tube algebras, the two $*$-algebra structures are identical on $\cT_{e,e} = \cT_{\{e\}}$.
Note that $\cT_{e,e}$ contains the fusion algebra $\cT_{\mathbbm 1, \mathbbm 1}$ as a full corner by Proposition~\ref{prop:Ta-is-full-corner-in-TSigma}.
Recall that analytic properties $\cC$, such as, amenability, Haagerup and property (T), are defined using admissible representations of the fusion algebra, that is, those representations which extend to that of the whole tube algebra (see~\citelist{\cite{MR3406647}\cite{MR3447719}}).
Since $\cT_{e,e}$ is a $*$-ideal in $\cT$, any admissible representation of $\cT_{\mathbbm 1, \mathbbm 1}$ can extend up to $\cT_{e,e}$.
So, $\cC$ is amenable, has the Haagerup property, or has property (T) if and only if $\cC^\omega$ exhibits the corresponding properties.
\end{rem}

\begin{rem}
Let us describe the corresponding deformation of monad as in Section \ref{sec:ind-half-br-monad}.
Let $\cZ^\omega$ denote the corresponding monad of the tensor category $\cC^\omega$.
Suppose that $X$ has degree $\gamma$, and $S, T, U \in \cI$ respectively have degree {}$s, t, u$.
Then, from Proposition \ref{prop:tw-cat-rigid} and formulas \eqref{eq:monad-prod-prec-formula}, \eqref{eq:std-sol-for-prod-with-assoc}, we see that the morphism $\mu^\omega_X \colon (\cZ^\omega)^2 (X) \to \cZ^\omega(X)$ is given by the collection of morphisms from $\bar{T} \otimes ((\bar{S} \otimes (X \otimes S)) \otimes T)$ to $\bar{U} \otimes (X \otimes U)$ given by
$$
\omega(\gamma, s, t) \bar \omega(t^{-1}, s^{-1}, \gamma s t) \omega(s^{-1}, \gamma s, t) \bar \omega(t^{-1} s^{-1}, s, t) \omega(t^{-1}, s^{-1}, s) \mu_X
$$
which appears only when $U$ is isomorphic to a subobject of $S \otimes T$, hence in particular $u = s t$.
This factor can be interpreted as a $2$-cochain $\tilde \psi(g_1, g_2)$ on $\cG$ by setting $g_1 = (s, s^{-1} \gamma s)$ and $g_2 = (t, (s t)^{-1} \gamma s t)$.
As expected, it is cohomologous to $\psi$ via the coboundary of $\xi(g) = \omega(s^{-1}, s, \gamma)$ for $g = (s, \gamma)$, hence is a $2$-cocycle in particular.
\end{rem}

\section{Examples}\label{sec:examples}

\subsection{Grading by cyclic groups}

Let $\Gamma$ be a cyclic group.
Since $\rH^3(\Z; \T)$ is trivial, we concentrate on the case of finite cyclic group $\Gamma = \Z / n \Z$.
The consideration in the previous section become rather simple.
As $\Gamma$ is commutative, the direct sum decomposition of Lemma \ref{lem:direct-sum-over-conjug-classes} becomes $\cT(\cC) = \bigoplus_{a=0}^{n-1} \cT_{\{a\}}(\cC)$.
Moreover, as $\rH^2(\Gamma; \T)$ is trivial, Corollary~\ref{cor:cocycle-twist-over-centralizer} implies that $\cT_{\{a\}}(\cC^\omega)$ is isomorphic to $\cT_{\{a\}}(\cC)$.

\begin{expl}
The group $\Z/2\Z$ has essentially just one nontrivial $3$-cocycle, $\omega(a, b, c) = (-1)^{a b c}$.
For the categories of Example~\ref{expl:TL-and-SUq2} we have (by \cite{MR3340190}) $(\cC^\TL_{\delta,+1})^\omega = \cC^\TL_{\delta,-1} \cong \Rep \SU_q(2)$ for positive $q$ satisfying $q + q^{-1} = \delta$.
This explains why the computation of the spectrum of annular algebra for the category $\cC^\TL_{\delta,+1}$ in \cite{MR3447719} gives exactly the same answer as the computation for the Drinfeld double of $\SU_q(2)$ with positive $q$ in~\cite{MR1213303}.
\end{expl}

By the discussion made above,  there is a `natural' equivalence of the Drinfeld centers $\cZ(\indcat{\cC})$ and $\cZ(\indcat{\cC^\omega})$ as C$^*$-categories, compatible with the associated gradings by $\Z/n\Z$.
While a general description of the monoidal structure in terms of the tube algebra (cf.~\cite{MR3254423}) will give a description of $\cZ(\indcat{\cC^\omega})$ as a monoidal category, there is a more concrete description available in this setting, as follows.

It is well known that the $3$-cocycles on $\Z/n\Z$ can be represented (up to coboundary) by
\begin{equation}
\label{eq:3-cocycles-on-cyclic-group}
\omega^k(a, b, c) = \exp\left( 2 \pi \sqrt{-1} k \left(\left\lfloor \frac{a+b}{n} \right\rfloor - \left\lfloor \frac{a}{n} \right\rfloor - \left\lfloor \frac{b}{n} \right\rfloor\right) \frac{c}{n} \right) \quad (k = 0, 1, \ldots, n-1).
\end{equation}
We will work with this concrete form.
Thus, the associated $2$-cocycle is
$$
\vphi_a^k(s, t) = \exp\left( 2 \pi \sqrt{-1} k \left(\left\lfloor \frac{s+t}{n} \right\rfloor - \left\lfloor \frac{s}{n} \right\rfloor - \left\lfloor \frac{t}{n} \right\rfloor\right ) \frac{a}{n} \right).
$$
Let us introduce a new monoidal structure $\otimes^{(k)}$ on $\cZ(\indcat{\cC})$.
When $(X, c)$ and $(X', c')$ are objects of $\cZ(\indcat{\cC})$ such that $X \in \obj(\indcat{\cC}_a)$ and $X' \in \obj(\indcat{\cC}_b)$, we set
$$
(c \otimes^{(k)} c')_Y = \bar\omega^{2k}(a, b, s) (c \otimes c')_Y
$$
when $Y \in \obj(\cC_s)$ and extend by linearity to the general case, and put $(X, c) \otimes^{(k)} (X', c') = (X \otimes X', c \otimes^{(k)} c')$.
This still defines a structure of monoidal category (with the same associator as $
\cZ(\indcat{\cC})$) because $\omega$ is a $2$-cocycle in the first two variables.

\begin{prop}
\label{prop:new-mon-prod-on-center}
Let $\cC$ be a C$^*$-tensor category graded by $\Z/n\Z$, and $\omega^k$ be the $3$-cocycle as above.
Then $\cZ(\indcat{\cC^{\omega^k}})$ is unitarily monoidally equivalent to $(\cZ(\indcat{\cC}), \otimes^{(k)})^{\omega^k}$.
\end{prop}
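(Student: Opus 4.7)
The plan is to construct an explicit unitary monoidal functor
\[
\Phi \colon (\cZ(\indcat{\cC}), \otimes^{(k)})^{\omega^k} \to \cZ(\indcat{\cC^{\omega^k}})
\]
which is the identity on the underlying C$^*$-category (noting that $\indcat{\cC}$ and $\indcat{\cC^{\omega^k}}$ coincide as linear categories) and rescales each half-braiding by a carefully chosen phase. For $(X, c)$ with $X \in \obj(\indcat{\cC}_a)$, I would set $\Phi(X, c) = (X, c^{(k)})$, where $c^{(k)}_Y = \lambda_a(s) c_Y$ for $Y \in \obj(\cC_s)$, extended by linearity to general $Y$. On morphisms $\Phi$ is the identity, which is consistent because uniformly rescaling half-braidings preserves the intertwining condition that defines morphisms in $\cZ$. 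The unit $\un$ will be strictly preserved once one arranges $\lambda_0 \equiv 1$.

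The first step is to determine $\lambda_a$. Substituting $c^{(k)}$ and the twisted associator $\alpha^{\omega^k}$ into \eqref{eq:half-br-mult-formula}, the three associators contribute $\vphi_a^k(s, t)$ while the rescaling contributes $\lambda_a(s) \lambda_a(t) / \lambda_a(s+t)$, so the half-braiding equation in $\cC^{\omega^k}$ reduces, via the corresponding identity in $\cC$, to
\[
\lambda_a(s+t) = \vphi_a^k(s, t) \lambda_a(s) \lambda_a(t) \qquad (s, t \in \Z/n\Z),
\]
i.e., $\lambda_a$ trivializes the $2$-cocycle $\vphi_a^k$ on $\Z/n\Z$. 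Since $\rH^2(\Z/n\Z; \bT) = 0$, such $\lambda_a$ exists; for the explicit cocycle \eqref{eq:3-cocycles-on-cyclic-group} with representatives in $\{0, 1, \ldots, n - 1\}$, one verifies that the choice $\lambda_a(s) = \exp(-2 \pi \sqrt{-1}\, k a s / n^2)$ works and satisfies $\lambda_0 \equiv 1$.

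Next, I would take the monoidal constraint $\Phi_2$ to be the identity on underlying objects. Its compatibility with the half-braidings reduces, after expanding the tensor product of $c^{(k)}$ and ${c'}^{(k)}$ on $Y \in \obj(\cC_s)$ for $X, X'$ of degrees $a, b$ (which generates three $\omega^k$-factors from the associators and $\lambda_a(s) \lambda_b(s)$ from the two half-braidings) and comparing with the image under $\Phi$ of $(X, c) \otimes^{(k)} (X', c')$ (which contributes $\lambda_{a+b}(s) \bar{\omega}^{2k}(a, b, s)$ on top of the untwisted tensor of half-braidings in $\cC$), to the identity
\[
\bar{\omega}^k(s, a, b)\, \omega^k(a, s, b)\, \bar{\omega}^k(a, b, s)\, \lambda_a(s)\, \lambda_b(s) = \bar{\omega}^{2k}(a, b, s)\, \lambda_{a+b}(s).
\]
For the explicit $\lambda_a$ above, the symmetry of the carry $\lfloor (a + b)/n \rfloor$ in its two arguments forces $\bar{\omega}^k(s, a, b) \omega^k(a, s, b) = 1$, reducing the identity to the already-known variation of $\lambda_a$ in $a$. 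The associator compatibility is then automatic, since both source and target carry the same $\omega^k$-twisted associator on the underlying tensor product of $\cC$. The inverse functor is obtained by replacing $\lambda_a$ with $\bar{\lambda}_a$, so $\Phi$ is a unitary monoidal equivalence.

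The main obstacle is the delicate bookkeeping in the tensor-compatibility step: the scalar $\bar{\omega}^{2k}(a, b, s)$ in the definition of $\otimes^{(k)}$ is tailored precisely to absorb the three $\omega^k$-factors from the associators $\alpha^{\omega^k}$ together with the $a$-variation of $\lambda_a$, and a cleaner conceptual explanation can be given via the deformed monad formula of the final remark in Section~\ref{sec:twist-ann-alg}. The choice of $\lambda_a$ itself is only unique up to a character of $\Z/n\Z$, but any admissible choice produces a monoidally equivalent functor through a natural isomorphism.
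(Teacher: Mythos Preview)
Your proposal is correct and follows essentially the same route as the paper: both define the functor as the identity on underlying objects and morphisms, rescale the half-braiding on $Y \in \obj(\cC_s)$ by the phase $\exp(-2\pi\sqrt{-1}\,k\,r(a)r(s)/n^2)$ (your $\lambda_a(s)$), take the monoidal constraint $F_2$ to be the identity, and reduce both the half-braiding and tensor-compatibility checks to the carry identity $r(s)+r(t)-r(s+t)=n(\lfloor(s+t)/n\rfloor-\lfloor s/n\rfloor-\lfloor t/n\rfloor)$ together with the symmetry $\omega^k(s,a,b)=\omega^k(a,s,b)$. Your framing of $\lambda_a$ as a trivialization of the $2$-cocycle $\vphi_a^k$ and the remark on the character ambiguity are pleasant additions, but do not change the argument.
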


\begin{proof}
Let $X$ be an object of $\indcat{\cC}$, and $(c_Y \colon Y \otimes X \to X \otimes Y)_Y$ be a unitary half braiding on $X$ (with respect to the structure of $\cC$).
We first define a half braiding $\tilde{c}$ on $X$ as an object of $\cC^\omega$.
By Remark~\ref{rem:center-graded}, it is enough to consider $(X, c) \in \obj(\cZ(\indcat{\cC}))$ with homogeneous object $X \in \obj(\indcat{\cC_a})$.

For $m \in \Z$, let $r(m)$ denote the unique integer $0 \le r(m) < n$ such that $m \equiv r(m) \bmod n$; in other words, $ r(m) = m - n \lfloor \frac m n \rfloor$.
When $Y$ is an object in $\cC_s$, put
$$
\tilde{c}_Y = \exp\left(-2 \pi \sqrt{-1} k\frac{r(s) r(a)}{n^2} \right) c_Y \colon Y \otimes X \to X \otimes Y,
$$
and extend it to general $Y$ using direct sum decomposition into homogeneous components.
We claim that $(X, \tilde{c})$ is a half braiding in $\cC^{\omega^k}$.
In view of \eqref{eq:half-br-mult-formula}, this amounts to verifying
\begin{multline*}
\omega^k(a, s, t) \exp\left(-2 \pi \sqrt{-1} k \frac{r(s) r(a)}{n^2} \right) \bar\omega^k(s, a, t) \exp\left(-\frac{2 \pi \sqrt{-1}}{n^2} r(t) r(a)\right) \omega^k(s, t, a) \\
= \exp\left(-2 \pi \sqrt{-1} k \frac{r(s + t) r(a)}{n^2} \right).
\end{multline*}
This is indeed the case, since the terms involving $\omega$ give $\omega^k(s, t, a)$, and we also have
\begin{equation}
\label{eq:res-and-floor}
r(s) + r(t) - r(s + t) = n \left( \flr{\frac{s+t}{n}} - \flr{\frac{s}{n}} - \flr{\frac{t}{n}}\right).
\end{equation}
This way we obtain a C$^*$-functor $F\colon \cZ(\indcat{\cC}) \to \cZ(\indcat{\cC}^{\omega^k})$ given by $F(X, c) = (X, \tilde{c})$.
It is an equivalence of categories, since we can `untwist' $\cC^{\omega^k}$ by $\bar{\omega}^k$ to recover $\cC$, and perform the same construction to produce an inverse of $F$.

It remains to show that $F$ can be enriched to a C$^*$-tensor functor if we first replace the monoidal structure of $\cZ(\indcat{\cC})$ by $\otimes^{(k)}$, and then twist it by $\omega^k$.
We claim that the natural unitary transformation $F_2\colon (X, \tilde{c}) \otimes (X', \tilde{c}') \to (X \otimes X', \widetilde{c \otimes^{(k)} c'})$ is simply represented by $1_{X \otimes X'}$.
Now, $1_{X \otimes X'}$ is a morphism of half braiding if and only if the diagram
$$
\begin{tikzcd}
Y \otimes (X \otimes X') \ar[d,"1"'] \ar[r, "\tilde{c} \otimes \tilde{c}'"] &  (X \otimes X') \otimes Y \ar[d,"1"]\\
Y \otimes (X \otimes X') \ar[r, "\widetilde{c \otimes^{(k)} c'}"'] & (X \otimes X') \otimes Y
\end{tikzcd}
$$
commutes.
To check this, as before we can also assume $X \in \obj(\indcat{\cC_a})$, $X' \in \obj(\indcat{\cC_b})$, and $Y \in \obj(\cC_s)$.
Then, the top row picks up the factor of
$$
\exp\left(- 2 \pi \sqrt{-1} k \frac{r(s) (r(a) + r(b))}{n^2} \right) \exp\left( - 2 \pi \sqrt{-1} k  \left( \flr{\frac{a+b}{n}} - \flr{\frac{a}{n}} - \flr{\frac{b}{n}} \right) \frac{s}{n} \right),
$$
while the bottom row picks up
$$
\exp\left( - 4 \pi \sqrt{-1} k \left( \flr{\frac{a+b}{n}} - \flr{\frac{a}{n}} - \flr{\frac{b}{n}} \right) \frac{s}{n} \right) \exp\left(- 2 \pi \sqrt{-1} k \frac{r(s) (r(a + b))}{n^2}  \right).
$$
These are indeed equal by \eqref{eq:res-and-floor}.

It remains to check the compatibility of $(F, 1_\un, F_2)$ with the associators of $(\cZ(\indcat{\cC}), \otimes^{(k)})^{\omega^k}$ and of $\cZ(\indcat{\cC}^{\omega^k})$, but it is obvious from the definitions.
\end{proof}

The structure $\otimes^{(k)}$ agrees with the original one when $n = 2$, or more generally if $n$ divides $2k$ in the parametrization of \eqref{eq:3-cocycles-on-cyclic-group}.
Moreover, the above proposition has an obvious parallel for $\cZ(\cC^{\omega^k})$, and also for nonunitary variants.

Suppose that $\cC$ is (resp.~unitarily) braided, or equivalently, that there is a (resp.~C$^*$-)tensor functor $F \colon \cC \to \cZ(\cC)$ of the form $X \mapsto (X, c)$.
If $n$ divides $2 k$, the above result implies that $X \mapsto (X, \tilde{c})$ is a (resp.~C$^*$-)tensor functor from $\cC^{\omega^k}$ to $\cZ(\cC^{\omega^k})$, hence $\cC^{\omega^k}$ is again (resp.~unitarily) braided.
If $n$ does not divide $2 k$ the situation looks more complicated, but at least we can say that $\cC^{\omega^k}$ is not braided when $\cC$ is either $\cC_{\Z/n\Z}$ or $\Rep \SU_q(n)$ as discussed below, cf.~\cite{MR3340190}*{Remark~4.4}.

\begin{expl}
\label{expl:kazh-wenzl-braided}
Let us describe the case of $\Rep \SU_q(n)$ for the root of unity $q = e^{\frac{\pi \sqrt{-1}}{m}}$, with $m \ge n - 1$.
Then it makes sense as a rigid C$^*$-tensor category $\cC$ (see, e.g.,~\cite{MR936086}) which we assume to be strict.
Its irreducible classes are parametrized by the dominant integral weights $\lambda$ of $\liesl_n$ such that $(\lambda + \rho, \alpha_{\max}) < m$, where $\rho$ is the half sum of positive roots, $\alpha_{\max}$ is the highest root, and $(\lambda, \mu)$ is the invariant inner product on the weight space normalized so that each root $\alpha$ satisfies $(\alpha, \alpha) = 2$.
If we denote the fundamental weights by $\omega_1, \ldots, \omega_{n-1}$, the above condition on $\lambda$ is equivalent to $\lambda = \sum_i \nu_i \omega_i$ with $\nu_i \ge 0$ and $\sum_i \nu_i \le m - n$.
Then $\cC$ is generated by the object $X_1$ (the `defining representation') corresponding to $\lambda = \omega_1$, and $X_1^{\otimes n}$ contains $\un$ with multiplicity $1$.
It follows that the universal grading group of $\cC$ is $\Z/n\Z$ such that $X_1$ belongs to the homogeneous component $\cC_1$.
Moreover, the classes $U^i$ ($i = 1, 2, \ldots, n - 1$) corresponding to $\lambda = (m - n)\omega_i$ exhaust the nontrivial invertible classes of $\cC$, and the class of $U^1$ satisfies $(U^1)^{\otimes k} \cong U^k$, $(U^1)^{\otimes n} \cong \un$~\cite{MR1741269}.
Now, since $\cC$ is a \emph{modular tensor category}, its Drinfeld center $\cZ(\cC)$ is equivalent to the Deligne product $\cC \boxtimes \cC^{\beta\opos}$ as a braided category, where $\beta\opos$ denotes taking the opposite braiding~\cite{MR1966525}.
Indeed, the equivalence $F\colon \cC \boxtimes \cC^{\beta\opos} \to \cZ(\cC)$ is given by $F(X \boxtimes X') = (X \otimes X', c)$, where $c = (c_Y)_Y$ is the half braiding on $X \otimes X'$ given by $c_Y = (1_X \otimes \beta_{X', Y}^{-1})(\beta_{Y, X} \otimes 1_{X'})$, with $\beta_{X, Y}\colon X \otimes Y \to Y \otimes X$ denoting the braiding of $\cC$.
In this picture the half braidings on the distinguished object $X_1$ are represented by $(X_1 \otimes U^i) \boxtimes U^{n-i}$ and $U^{n-i} \boxtimes (X_1 \otimes U^i)$ with the convention $U^0 = U^n = \un$.
In the notation of Proposition~\ref{prop:new-mon-prod-on-center}, the $n$-th tensor power of $(X_1 \otimes U^i) \boxtimes U^{n-i}$ with respect to $\otimes^{(k)}$ is the product of $F(X_1^{\otimes n} \boxtimes \un)$ with the half braiding on $\un$ given by $\exp(-4\pi \sqrt{-1} k s / n) \iota_Y$ for $Y \in \obj(\cC_s)$ up to the natural identification of $\un \otimes Y$ and $Y \otimes \un$ with $Y$.
Thus, it contains the trivial half braiding if and only if $n$ divides $2 k$, and the same can be said for $U^{n-i} \boxtimes (X_1 \otimes U^i)$.
In view of Proposition~\ref{prop:new-mon-prod-on-center}, there is a (C$^*$-)tensor functor from $\cC^{\omega^k}$ to $\cZ(\cC^{\omega^k})$ which is identity on objects, or equivalently $\cC^{\omega^k}$ is (unitarily) braided, if and only if $n$ divides $2 k$.
\end{expl}

\subsection{Free product}

For $\vlon \in \{ +, - \}$, let $\cC_\vlon$ be rigid semisimple C$^*$-tensor categories with simple units $\un_\vlon$ respectively.
Let $\cI_\vlon$ be a set of representatives of isomorphism classes of simple objects in $\cC_\vlon$, and $\Gamma_\vlon$ be the universal grading group.
Consider the \emph{free product} category $\cC = \cC_+ \ast \cC_{-}$.
Then the  universal grading group $\Gamma$ of $\cC$ is  isomorphic to the group free product $\Gamma_+ \ast \Gamma_-$.
Indeed, if  $\mcal W$ is the set of words (including the empty one) whose letters alternately belong to $\cI_+ \setminus \{ \un_+ \}$ and $\cI_- \setminus \{ \un_- \}$, then  $\cC$ has a set of representatives of simple objects enumerated by $\mcal W$, say $\cI = \{X_w\}_{w\in \mcal W}$  such that:
\begin{enumerate}
\item $X_\emptyset = \un$,
\item $X_{w_1}  \otimes X_{w_2} \cong X_{(w_1, {w_2})}$ whenever the last letter of $w_1$ and the first of $w_2$ have opposite signs (that is, if one is in $\cI_+$, then other is in $\cI_-$),
\item if $w_1 = (w'_2, Y),  w_2 = (Z, w'_2) \in \mcal W$ such that $Y$ and $Z$ have the same sign $\vlon = \pm$, then the direct sum decomposition of $X_{w_1} \otimes X_{w_2}$ into simple objects can be obtained inductively on the lengths of $w_1$ and $w_2$  by the following rule:
\begin{enumerate}
\item $X_{w_1} \otimes X_{w_2} \cong \displaystyle\bigoplus_{U\in \cI_\vlon} \left(X_{(w'_1, U, w'_2)}\right)^{\oplus \dim (\cC_\vlon (U, Y \otimes Z))}$ when $Y \ncong \bar{Z}$,
\item $X_{w_1} \otimes X_{w_2} \cong \left(X_{w'_1} \otimes X_{w'_2}\right) \oplus \biggl( \displaystyle\bigoplus_{U\in \cI_\vlon \setminus \{ \un\}} \left( X_{(w'_1, U, w'_2)} \right)^{\oplus \dim (\cC_\vlon (U, Y \otimes Z))} \biggr)$ when $Y \cong \bar{Z}$.
\end{enumerate}
\end{enumerate}
Using (i), (ii) and (iii), it is completely routine to check that the map
\[
\Gamma \ni [X_w] \mapsto Y_1 \cdots Y_n \in \Gamma_+ \ast \Gamma_-  \t { where } w =(Y_1, \ldots, Y_n) 
\]
is an isomorphism.

The cohomology of free product group is given by $\rH^3(\Gamma_+ * \Gamma_-; \bT) \cong \rH^3(\Gamma_+; \bT) \times \rH^3(\Gamma_-; \bT)$ (see, e.g., \cite{MR1269324}*{Section 6.2}).
This allows us to decompose the twisting procedure to the cases when $\omega$ on $\Gamma_+ * \Gamma_-$ is induced from a $3$-cocycle $\omega_0$ on $\Gamma_\vlon$ through the canonical quotient map $\Gamma_+ * \Gamma_- \to \Gamma_\vlon$.
This induces a groupoid homomorphism (a functor) $\cG_{\Gamma_+ * \Gamma_-} \to \cG_{\Gamma_\vlon}$, and the associated map $\rH^2(\cG_{\Gamma_\vlon}; \bT) \to \rH^2(\cG_{\Gamma_+ * \Gamma_-}; \bT)$.
If $\Psi_0$ is the $2$-cocycle on $\cG_{\Gamma_\vlon}$ associated with $\omega_0$, its pullback $\Psi$ on $\cG_{\Gamma_+ * \Gamma_-}$ is the one associated with $\omega$.

\subsection{Direct product}
\label{ssub:direct_product}

Next let us consider grading by direct product groups.
One source of such a structure is the Deligne product $\cC_1 \boxtimes \cC_2$ of $\cC_1$ and $\cC_2$.
In this case we have $\Ch(\cC_1 \boxtimes \cC_2) = \Ch(\cC_1) \times \Ch(\cC_2)$.

Suppose moreover that the grading is by a finite commutative group.
The $\bT$-valued $3$-cocycles on such groups are concretely characterized in \cite{MR3340190}*{Proposition A.3}, as follows.
Write $\Gamma = (\Z/n_1\Z) \times \cdots \times (\Z/n_k\Z)$ for $n_i \in \N$.
Then a set of generators on $\rH^3(\Gamma; \bT)$ is given by:
\begin{enumerate}
\item those of the form \eqref{eq:3-cocycles-on-cyclic-group} (with $k=1$) for some factor $\Z/n_i \Z$,
\item for distinct indices $i, j$:
$$
\phi_{ij}(a, b, c) = \exp\biggl(2 \pi \sqrt{-1} \left(\left\lfloor \frac{a_i+b_i}{n_i} \right\rfloor - \left\lfloor \frac{a_i}{n_i} \right\rfloor - \left\lfloor \frac{b_i}{n_i} \right\rfloor\right) \frac{c_j}{n_j} \biggr),
$$
\item for distinct indices $i, j, k$:
$$
\phi_{ijk}(a, b, c) = \exp\biggl( \frac{2 \pi \sqrt{-1}}{\gcd(n_i, n_j, n_k)} a_i b_j c_k \biggr).
$$
\end{enumerate}

\begin{expl}
Consider the case $\Gamma = (\Z/2\Z) \times (\Z/2\Z) \times (\Z/2\Z)$, and let $a = (0, 0, 1)$.
Then, the $3$-cocycle $\phi_{123}$ gives $\varphi_a(s, t) = \exp( \pi \sqrt{-1} s_1 t_2)$, or in a normalized form
$$
\varphi'_a(s, t) = \exp\left(\frac{\pi \sqrt{-1}}{2} (s_1 t_2 - s_2 t_1)\right).
$$
The tube algebra of $\cC_\Gamma$ is a commutative algebra of dimension $2^6$, while that of the twisted category $\cC_\Gamma^{\phi_{123}}$ has a noncommutative direct summand $\cT_{a,a}$.
In particular, the number of irreducible classes in $\cZ(\cC_\Gamma^{\phi_{123}})$ is smaller than that of $\cZ(\cC_\Gamma)$.
\end{expl}

\begin{bibdiv}
\begin{biblist}

\bib{MR2146291}{article}{
      author={Altschuler, D.},
      author={Coste, A.},
      author={Maillard, J.-M.},
       title={Representation theory of twisted group double},
        date={2004},
        ISSN={0182-4295},
     journal={Ann. Fond. Louis de Broglie},
      volume={29},
      number={4},
       pages={681\ndash 694},
      eprint={\href{http://arxiv.org/abs/hep-th/0309257}{{\tt
  arXiv:hep-th/0309257 [hep-th]}}},
      review={\MR{2146291}},
}

\bib{MR1070067}{article}{
      author={B\'antay, P.},
       title={Orbifolds and {H}opf algebras},
        date={1990},
        ISSN={0370-2693},
     journal={Phys. Lett. B},
      volume={245},
      number={3-4},
       pages={477\ndash 479},
         url={https://doi.org/10.1016/0370-2693(90)90676-W},
      review={\MR{1070067}},
}

\bib{MR3699167}{article}{
      author={Bisch, Dietmar},
      author={Das, Paramita},
      author={Ghosh, Shamindra~Kumar},
      author={Rakshit, Narayan},
       title={Tube algebra of group-type subfactors},
        date={2017},
        ISSN={0129-167X},
     journal={Internat. J. Math.},
      volume={28},
      number={10},
       pages={1750069, 32},
      eprint={\href{http://arxiv.org/abs/1701.00097}{\texttt{arXiv:1701.00097
  [math.OA]}}},
         url={https://doi.org/10.1142/S0129167X17500690},
         doi={10.1142/S0129167X17500690},
      review={\MR{3699167}},
}

\bib{MR1741269}{article}{
      author={Brugui{\`e}res, Alain},
       title={Cat\'egories pr\'emodulaires, modularisations et invariants des
  vari\'et\'es de dimension 3},
        date={2000},
        ISSN={0025-5831},
     journal={Math. Ann.},
      volume={316},
      number={2},
       pages={215\ndash 236},
         url={http://dx.doi.org/10.1007/s002080050011},
         doi={10.1007/s002080050011},
      review={\MR{1741269 (2001d:18009)}},
}

\bib{MR2869176}{article}{
      author={Brugui{{\`e}}res, Alain},
      author={Virelizier, Alexis},
       title={Quantum double of {H}opf monads and categorical centers},
        date={2012},
        ISSN={0002-9947},
     journal={Trans. Amer. Math. Soc.},
      volume={364},
      number={3},
       pages={1225\ndash 1279},
         url={http://dx.doi.org/10.1090/S0002-9947-2011-05342-0},
         doi={10.1090/S0002-9947-2011-05342-0},
      review={\MR{2869176}},
}

\bib{MR3254423}{article}{
      author={Das, Paramita},
      author={Ghosh, Shamindra~Kumar},
      author={Gupta, Ved~Prakash},
       title={Drinfeld center of planar algebra},
        date={2014},
        ISSN={0129-167X},
     journal={Internat. J. Math.},
      volume={25},
      number={8},
       pages={1450076 (43 pages)},
         url={http://dx.doi.org/10.1142/S0129167X14500761},
         doi={10.1142/S0129167X14500761},
      review={\MR{3254423}},
}

\bib{MR3447719}{article}{
      author={Ghosh, Shamindra~Kumar},
      author={Jones, Corey},
       title={Annular representation theory for rigid {$C^*$}-tensor
  categories},
        date={2016},
        ISSN={0022-1236},
     journal={J. Funct. Anal.},
      volume={270},
      number={4},
       pages={1537\ndash 1584},
      eprint={\href{http://arxiv.org/abs/1502.06543}{\texttt{arXiv:1502.06543
  [math.OA]}}},
         url={http://dx.doi.org/10.1016/j.jfa.2015.08.017},
         doi={10.1016/j.jfa.2015.08.017},
      review={\MR{3447719}},
}

\bib{MR3749336}{article}{
      author={Ionescu, Marius},
      author={Kumjian, Alex},
      author={Sims, Aidan},
      author={Williams, Dana~P.},
       title={A stabilization theorem for {F}ell bundles over groupoids},
        date={2018},
        ISSN={0308-2105},
     journal={Proc. Roy. Soc. Edinburgh Sect. A},
      volume={148},
      number={1},
       pages={79\ndash 100},
      eprint={\href{http://arxiv.org/abs/1512.06046}{\texttt{MR3749336
  [math.OA]}}},
         url={https://doi.org/10.1017/S0308210517000129},
         doi={10.1017/S0308210517000129},
      review={\MR{3749336}},
}

\bib{MR1782145}{article}{
      author={Izumi, Masaki},
       title={The structure of sectors associated with {L}ongo-{R}ehren
  inclusions. {I}. {G}eneral theory},
        date={2000},
        ISSN={0010-3616},
     journal={Comm. Math. Phys.},
      volume={213},
      number={1},
       pages={127\ndash 179},
         url={http://dx.doi.org/10.1007/s002200000234},
         doi={10.1007/s002200000234},
      review={\MR{1782145 (2002k:46160)}},
}

\bib{MR1929335}{incollection}{
      author={Jones, Vaughan F.~R.},
       title={The annular structure of subfactors},
        date={2001},
   booktitle={Essays on geometry and related topics, {V}ol. 1, 2},
      series={Monogr. Enseign. Math.},
      volume={38},
   publisher={Enseignement Math., Geneva},
       pages={401\ndash 463},
  eprint={\href{http://arxiv.org/abs/math/0105071}{\texttt{arXiv:math/0105071
  [math.OA]}}},
      review={\MR{1929335 (2003j:46094)}},
}

\bib{MR1237835}{incollection}{
      author={Kazhdan, David},
      author={Wenzl, Hans},
       title={Reconstructing monoidal categories},
        date={1993},
   booktitle={I. {M}. {G}el\cprime fand {S}eminar},
      series={Adv. Soviet Math.},
      volume={16},
   publisher={Amer. Math. Soc.},
     address={Providence, RI},
       pages={111\ndash 136},
      review={\MR{1237835 (95e:18007)}},
}

\bib{MR1443836}{article}{
      author={Kumjian, Alex},
       title={Fell bundles over groupoids},
        date={1998},
        ISSN={0002-9939},
     journal={Proc. Amer. Math. Soc.},
      volume={126},
      number={4},
       pages={1115\ndash 1125},
      eprint={\href{http://arxiv.org/abs/math/9607230}{{\tt arXiv:math/9607230
  [math.OA]}}},
         url={http://dx.doi.org/10.1090/S0002-9939-98-04240-3},
         doi={10.1090/S0002-9939-98-04240-3},
      review={\MR{1443836 (98i:46055)}},
}

\bib{MR1332979}{article}{
      author={Longo, R.},
      author={Rehren, K.-H.},
       title={Nets of subfactors},
        date={1995},
        ISSN={0129-055X},
     journal={Rev. Math. Phys.},
      volume={7},
      number={4},
       pages={567\ndash 597},
  eprint={\href{http://arxiv.org/abs/hep-th/9411077}{\texttt{arXiv:hep-th/9411077
  [hep-th]}}},
         url={http://dx.doi.org/10.1142/S0129055X95000232},
         doi={10.1142/S0129055X95000232},
        note={Workshop on Algebraic Quantum Field Theory and Jones Theory
  (Berlin, 1994)},
      review={\MR{1332979 (96g:81151)}},
}

\bib{MR1712872}{book}{
      author={Mac~Lane, Saunders},
       title={Categories for the working mathematician},
     edition={Second},
      series={Graduate Texts in Mathematics},
   publisher={Springer-Verlag, New York},
        date={1998},
      volume={5},
        ISBN={0-387-98403-8},
      review={\MR{1712872 (2001j:18001)}},
}

\bib{MR1966525}{article}{
      author={M{\"u}ger, Michael},
       title={From subfactors to categories and topology. {II}. {T}he quantum
  double of tensor categories and subfactors},
        date={2003},
        ISSN={0022-4049},
     journal={J. Pure Appl. Algebra},
      volume={180},
      number={1-2},
       pages={159\ndash 219},
  eprint={\href{http://arxiv.org/abs/math/0111205}{\texttt{arXiv:math/0111205
  [math.CT]}}},
         url={http://dx.doi.org/10.1016/S0022-4049(02)00248-7},
         doi={10.1016/S0022-4049(02)00248-7},
      review={\MR{1966525 (2004f:18014)}},
}

\bib{MR2446021}{article}{
      author={Muhly, Paul~S.},
      author={Williams, Dana~P.},
       title={Equivalence and disintegration theorems for {F}ell bundles and
  their {$C^*$}-algebras},
        date={2008},
        ISSN={0012-3862},
     journal={Dissertationes Math. (Rozprawy Mat.)},
      volume={456},
       pages={1\ndash 57},
      eprint={\href{http://arxiv.org/abs/0806.1022}{{\tt arXiv:0806.1022
  [math.OA]}}},
         url={http://dx.doi.org/10.4064/dm456-0-1},
         doi={10.4064/dm456-0-1},
      review={\MR{2446021 (2010b:46146)}},
}

\bib{MR3204665}{book}{
      author={Neshveyev, Sergey},
      author={Tuset, Lars},
       title={Compact quantum groups and their representation categories},
      series={Cours Sp{\'e}cialis{\'e}s [Specialized Courses]},
   publisher={Soci{\'e}t{\'e} Math{\'e}matique de France, Paris},
        date={2013},
      volume={20},
        ISBN={978-2-85629-777-3},
      review={\MR{3204665}},
}

\bib{MR3340190}{article}{
      author={Neshveyev, Sergey},
      author={Yamashita, Makoto},
       title={Twisting the {$q$}-deformations of compact semisimple {L}ie
  groups},
        date={2015},
        ISSN={0025-5645},
     journal={J. Math. Soc. Japan},
      volume={67},
      number={2},
       pages={637\ndash 662},
      eprint={\href{http://arxiv.org/abs/1305.6949}{\texttt{arXiv:1305.6949
  [math.OA]}}},
         url={http://dx.doi.org/10.2969/jmsj/06720637},
         doi={10.2969/jmsj/06720637},
      review={\MR{3340190}},
       label={NY15a},
}

\bib{MR3509018}{article}{
      author={Neshveyev, Sergey},
      author={Yamashita, Makoto},
       title={Drinfeld {C}enter and {R}epresentation {T}heory for {M}onoidal
  {C}ategories},
        date={2016},
        ISSN={0010-3616},
     journal={Comm. Math. Phys.},
      volume={345},
      number={1},
       pages={385\ndash 434},
      eprint={\href{http://arxiv.org/abs/1501.07390}{\texttt{arXiv:1501.07390
  [math.OA]}}},
         url={http://dx.doi.org/10.1007/s00220-016-2642-7},
         doi={10.1007/s00220-016-2642-7},
      review={\MR{3509018}},
}

\bib{arXiv:1511.06332}{misc}{
      author={Neshveyev, Sergey},
      author={Yamashita, Makoto},
       title={A few remarks on the tube algebra of a monoidal category},
         how={preprint},
        date={2015},
      eprint={\href{http://arxiv.org/abs/1511.06332}{\texttt{arXiv:1511.06332
  [math.OA]}}},
        note={to appear in Proc. Edinb.  Math. Soc.},
       label={NY15b},
}

\bib{MR1317353}{incollection}{
      author={Ocneanu, Adrian},
       title={Chirality for operator algebras},
        date={1994},
   booktitle={Subfactors ({K}yuzeso, 1993)},
   publisher={World Sci. Publ., River Edge, NJ},
       pages={39\ndash 63},
      review={\MR{MR1317353 (96e:46082)}},
}

\bib{DOI:10.1093/imrn/rnw304}{article}{
      author={Popa, Sorin},
      author={Shlyakhtenko, Dimitri},
      author={Vaes, Stefaan},
       title={Cohomology and {$L^2$}-{B}etti numbers for subfactors and
  quasi-regular inclusions},
        date={2018},
     journal={Internat. Math. Res. Notices},
      volume={2018},
      number={8},
       pages={2241\ndash 2331},
      eprint={\href{http://arxiv.org/abs/1511.07329}{\texttt{arXiv:1511.07329
  [math.OA]}}},
         doi={10.1093/imrn/rnw304},
}

\bib{MR3406647}{article}{
      author={Popa, Sorin},
      author={Vaes, Stefaan},
       title={Representation theory for subfactors, {$\lambda$}-lattices and
  {$\rm C^*$}-tensor categories},
        date={2015},
        ISSN={0010-3616},
     journal={Comm. Math. Phys.},
      volume={340},
      number={3},
       pages={1239\ndash 1280},
      eprint={\href{http://arxiv.org/abs/1412.2732}{\texttt{arXiv:1412.2732
  [math.OA]}}},
         url={http://dx.doi.org/10.1007/s00220-015-2442-5},
         doi={10.1007/s00220-015-2442-5},
      review={\MR{3406647}},
}

\bib{MR1213303}{article}{
      author={Pusz, Wies{\l}aw},
       title={Irreducible unitary representations of quantum {L}orentz group},
        date={1993},
        ISSN={0010-3616},
     journal={Comm. Math. Phys.},
      volume={152},
      number={3},
       pages={591\ndash 626},
         url={https://projecteuclid.org/euclid.cmp/1104252519},
         doi={10.1007/BF02096620},
      review={\MR{1213303 (94c:81085)}},
}

\bib{MR2674592}{book}{
      author={Turaev, Vladimir},
       title={Homotopy quantum field theory},
      series={EMS Tracts in Mathematics},
   publisher={European Mathematical Society (EMS), Z{\"u}rich},
        date={2010},
      volume={10},
        ISBN={978-3-03719-086-9},
         url={http://dx.doi.org/10.4171/086},
         doi={10.4171/086},
        note={Appendix 5 by Michael M{\"u}ger and Appendices 6 and 7 by Alexis
  Virelizier},
      review={\MR{2674592}},
}

\bib{MR1269324}{book}{
      author={Weibel, Charles~A.},
       title={An introduction to homological algebra},
      series={Cambridge Studies in Advanced Mathematics},
   publisher={Cambridge University Press, Cambridge},
        date={1994},
      volume={38},
        ISBN={0-521-43500-5; 0-521-55987-1},
         url={https://doi.org/10.1017/CBO9781139644136},
      review={\MR{1269324}},
}

\bib{MR936086}{article}{
      author={Wenzl, Hans},
       title={Hecke algebras of type {$A_n$} and subfactors},
        date={1988},
        ISSN={0020-9910},
     journal={Invent. Math.},
      volume={92},
      number={2},
       pages={349\ndash 383},
         url={http://dx.doi.org/10.1007/BF01404457},
         doi={10.1007/BF01404457},
      review={\MR{936086 (90b:46118)}},
}

\bib{MR0255771}{article}{
      author={Westman, Joel~J.},
       title={Cohomology for ergodic groupoids},
        date={1969},
        ISSN={0002-9947},
     journal={Trans. Amer. Math. Soc.},
      volume={146},
       pages={465\ndash 471},
         url={https://doi.org/10.2307/1995186},
      review={\MR{0255771}},
}

\bib{MR2443249}{article}{
      author={Willerton, Simon},
       title={The twisted {D}rinfeld double of a finite group via gerbes and
  finite groupoids},
        date={2008},
        ISSN={1472-2747},
     journal={Algebr. Geom. Topol.},
      volume={8},
      number={3},
       pages={1419\ndash 1457},
      eprint={\href{http://arxiv.org/abs/math/0503266}{{\tt arXiv:math/0503266
  [math.QA]}}},
         url={https://doi.org/10.2140/agt.2008.8.1419},
      review={\MR{2443249}},
}

\bib{MR1426907}{article}{
      author={Witherspoon, S.~J.},
       title={The representation ring of the twisted quantum double of a finite
  group},
        date={1996},
        ISSN={0008-414X},
     journal={Canad. J. Math.},
      volume={48},
      number={6},
       pages={1324\ndash 1338},
         url={https://doi.org/10.4153/CJM-1996-070-6},
      review={\MR{1426907}},
}

\bib{MR890482}{article}{
      author={Woronowicz, S.~L.},
       title={Twisted {${\rm SU}(2)$} group. {A}n example of a noncommutative
  differential calculus},
        date={1987},
        ISSN={0034-5318},
     journal={Publ. Res. Inst. Math. Sci.},
      volume={23},
      number={1},
       pages={117\ndash 181},
         url={http://dx.doi.org/10.2977/prims/1195176848},
         doi={10.2977/prims/1195176848},
      review={\MR{890482 (88h:46130)}},
}

\bib{MR1103378}{incollection}{
      author={Yamagami, Shigeru},
       title={On primitive ideal spaces of {$C^*$}-algebras over certain
  locally compact groupoids},
        date={1990},
   booktitle={Mappings of operator algebras ({P}hiladelphia, {PA}, 1988)},
      series={Progr. Math.},
      volume={84},
   publisher={Birkh\"auser Boston},
     address={Boston, MA},
       pages={199\ndash 204},
      review={\MR{1103378 (92j:46110)}},
}

\end{biblist}
\end{bibdiv}

\end{document}